\documentclass{amsart}
\usepackage{amscd,amsmath,amssymb,amsfonts}
\usepackage[T1]{fontenc}
\usepackage{lmodern}
\usepackage[cmtip,all]{xy}
\usepackage{doi}

\newtheorem{thm}{Theorem}[section]
\newtheorem{prop}[thm]{Proposition}
\newtheorem{lem}[thm]{Lemma}
\newtheorem{cor}[thm]{Corollary}
\newtheorem{IntroThm}{Theorem}
\newtheorem{IntroCor}[IntroThm]{Corollary}
\theoremstyle{definition}
\newtheorem{Def}[thm]{Definition}
\theoremstyle{remark}
\newtheorem{rem}[thm]{Remark}
\newtheorem{ex}[thm]{Example}

\numberwithin{equation}{section}

\usepackage{mathrsfs}

\newcommand{\sI}{{\mathscr I}}

\newcommand{\sL}{{\mathscr L}}

\newcommand{\sO}{{\mathscr O}}

\newcommand{\sX}{{\mathscr X}}
\newcommand{\sY}{{\mathscr Y}}

\newcommand{\C}{{\mathbf C}}

\newcommand{\G}{{\mathbf G}}

\renewcommand{\P}{{\mathbf P}}
\newcommand{\Q}{{\mathbf Q}}
\newcommand{\R}{{\mathbf R}}

\newcommand{\Z}{{\mathbf Z}}
\renewcommand{\epsilon}{\varepsilon}
\renewcommand{\phi}{\varphi}
\newcommand{\CH}{{\rm CH}}

\newcommand{\Pic}{{\rm Pic}}

\newcommand{\Spec}{\operatorname{Spec}}

\newcommand{\GL}{{\operatorname{\rm GL}}}
\renewcommand{\deg}{{\mathrm{deg}}}
\renewcommand{\L}{\mathbf{L}}
\newcommand{\ind}{\mathrm{ind}}
\renewcommand{\gcd}{\mathrm{gcd}}

\newcommand{\td}{\mathrm{td}}
\newcommand{\mult}{\mathrm{mult}}
\newcommand{\ch}{\mathrm{ch}}
\newcommand{\Rep}{\mathrm{Rep}}
\newcommand{\sig}{\mathrm{sig}}
\newcommand{\Laz}{\mathbf{Laz}}
{\setbox0\hbox{$ $}}\fontdimen16\textfont2=\fontdimen17\textfont2
\hyphenpenalty=150
\pretolerance=500

\makeatletter
\def\myrightarrow{{\setbox\z@\hbox{$\rightarrow$}\dimen0\ht\z@\multiply\dimen0 8\divide\dimen0 10\ht\z@\dimen0\box\z@}}
\def\myrightarrowfill@{\arrowfill@\relbar\relbar\myrightarrow}
\newcommand{\myxrightarrow}[2][]{\ext@arrow 0359\myrightarrowfill@{#1}{#2}}
\makeatother
\newcommand{\isoto}{\myxrightarrow{\mkern5mu\sim\mkern5mu}}

\begin{document}

\title[Index of varieties over Henselian fields]{Index of varieties over Henselian fields and Euler characteristic of coherent sheaves}
\author{H\'el\`ene Esnault}
\author{Marc Levine}
\author{Olivier Wittenberg}
\address{
FU Berlin\\
Mathematik\\
Arnimallee 3\\
14195 Berlin\\
Germany}
\email{esnault@math.fu-berlin.de}
\address{Universit\"at Duisburg-Essen\\
Fakult\"at Mathematik, Campus Essen\\
45127 Essen\\ 
Germany}
\email{marc.levine@uni-due.de}
\address{D\'epartement de math\'ematiques et applications\\
\'Ecole normale sup\'erieure\\
45 rue d'Ulm\\
75230 Paris Cedex 05\\
France}
\email{wittenberg@dma.ens.fr}
\thanks{The first author is supported by the Einstein Foundation, the ERC Advanced
Grant 226257 and the Chaire d'Excellence 2011 de la Fondation Sciences
Math\'ematiques de Paris, the second author by the Alexander von Humboldt
Foundation}


\renewcommand{\abstractname}{Abstract}
\begin{abstract}
Let~$X$ be a smooth proper variety over the quotient field of
a Henselian discrete valuation ring with algebraically closed residue field
of characteristic~$p$.
We show that for any coherent sheaf~$E$ on~$X$, the index of~$X$ divides the Euler--Poincar\'e characteristic $\chi(X,E)$ if $p=0$ or $p>\dim(X)+1$.
If $0<p\leq \dim(X)+1$, the prime-to\nobreakdash-$p$ part of the index of~$X$ divides $\chi(X,E)$.
Combining this with the Hattori--Stong theorem yields an analogous result concerning the divisibility of the cobordism class of~$X$ by the index of~$X$.

As a corollary, rationally connected varieties over the maximal unramified extension of a $p$\nobreakdash-adic field possess a zero-cycle
of $p$\nobreakdash-power degree (a zero-cycle of degree~$1$ if $p>\dim(X)+1$).
When $p=0$, such statements also have implications for the possible
multiplicities of singular fibers in degenerations of complex projective varieties.
\end{abstract}

\date{September 26, 2012; revised on January 9, 2013}
\maketitle

\section*{Introduction}

The \emph{index} of a variety~$X$ over a field~$K$ is the
smallest positive degree of a zero-cycle on~$X$, or equivalently, the
greatest common divisor
of the degrees $[L:K]$
of all finite extensions $L/K$ such that $X(L)\neq\emptyset$.
The present paper is devoted to investigating this invariant when~$K$ is the quotient field of an excellent Henselian discrete valuation ring with
algebraically closed residue field, for instance
$\Q_p^{\mathrm{nr}}$, the maximal unramified
extension of~$\Q_p$, or
the field of formal Laurent
series $\C((t))$.

Such fields are~$(C_1)$ fields in the sense of
Lang~\cite{Lang}.
Recall that a field~$K$ is said to be~$(C_1)$ if every hypersurface of degree $d\leq n$ in~$\P^n_K$ possesses a rational point.
Smooth hypersurfaces of degree $d \leq n$ in $\P^n_K$ are examples of smooth Fano varieties,
in particular they are rationally chain connected (and thus separably rationally connected if~$K$ has characteristic~$0$).
It is an old question raised by Lang, Manin, and Koll\'ar, whether
smooth proper varieties over~$K$
which are either Fano or separably rationally connected
always have a rational point
if~$K$ is $(C_1)$
(see~\cite{L}, \cite[p.~48, Remark~2.6 (ii)]{Ma}).  A positive answer is known to hold
when~$K$ is a finite field (see \cite{Es}).
In the separably rationally connected case, a positive answer also holds
when~$K$ is the function field of a curve over an algebraically closed field
(see \cite{GHS}, \cite{dJS}), from which it follows, by a global-to-local approximation argument, that a positive answer holds when~$K$ is the quotient field of
an equal characteristic Henselian discrete valuation ring with algebraically closed residue field
(see \cite[Th\'eor\`eme~7.5]{CT}).  In the local situation, no direct proof is known.  On the other hand, over function fields of curves,
the arguments of~\cite{GHS} and~\cite{dJS} rely on the study of moduli spaces of rational curves and are thus anchored in geometry; in particular,
they shed no light on unequal characteristic local fields with algebraically closed residue fields, such as~$\Q_p^{\mathrm{nr}}$.  It~is still unknown whether any smooth proper rationally connected variety over~$\Q_p^{\mathrm{nr}}$ possesses a rational point.

We can ask for less by considering the index.
As the index of a proper variety~$X$ over~$K$ is an invariant of cohomological nature (being determined by the cokernel of the degree map $\CH_0(X) \to \Z$ from
the Chow group of zero-cycles),
one would expect that cohomological conditions on~$X$, rather than the actual geometry of~$X$, should suffice to control the index. 

Indeed, various authors have shown that smooth proper varieties over $K=\C((t))$ which satisfy $H^i(X,\sO)=0$ for all $i>0$ have index~$1$
(see \cite[p.~162]{Mori}, \cite[p.~194]{Ko2}, \cite{NicaiseLetter}, \cite[Proposition~7.3]{CTV}; the last three references rely on Hodge theory).
This statement applies in particular to rationally connected varieties.
On the other hand, it is a well-known consequence of the adjunction formula
for surfaces that a family of curves $f:X\to S$,
where~$X$ is a smooth surface and~$S$ is a smooth curve,
has no multiple fiber if the generic fiber of~$f$ is a smooth, geometrically irreducible curve of genus~$2$.
Equivalently, every smooth proper curve of genus~$2$ over $\C((t))$ has index~$1$,
even though the groups $H^i(X,\sO)$ do not vanish in this case.

In this paper, we show that these two statements are in fact instances of a single general phenomenon relating the index of~$X$ over~$K$ and the Euler--Poincar\'e characteristic
of coherent sheaves on~$X$.  Note that $|\chi(X,\sO_X)|=1$ when~$X$ is either a rationally connected variety or a curve of genus~$2$.

\begin{IntroThm}[see Theorem~\ref{th:dividesregular}, Theorem~\ref{th:B}]
\label{thm01}
Let~$R$ be a Henselian discrete valuation ring with quotient field~$K$ and algebraically closed residue field~$k$.
Let~$X$ be a proper scheme over~$K$.
Assume~$k$ has characteristic~$0$.
Then, for any coherent sheaf~$E$ on~$X$, the index of~$X$ over~$K$ divides $\chi(X,E)$.
\end{IntroThm}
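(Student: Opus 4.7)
The plan is to pass to a regular proper $R$-model of $X$ with controlled special fiber, extend $E$ to it flatly, and extract divisibility of the Euler characteristic from the multiplicities of the components. As preliminary reductions, d\'evissage in~$K_0'(X)$ expresses $[E]=\sum n_V[\sO_V]$ for integral closed subschemes $V\subset X$; combined with the inclusion $V\subset X$ giving $\ind(X)\mid\ind(V)$, this reduces the theorem to proving $\ind(V)\mid\chi(V,\sO_V)$ for every integral closed~$V\subset X$. An induction on~$\dim V$, using Hironaka's resolution of singularities (valid because $\Char K=0$) and Zariski's main theorem for normalisations, with the correction terms $R^if_*\sO_{V'}$ ($i\geq1$) and the non-normal cokernel supported in strictly smaller dimension (hence handled by the inductive hypothesis), further reduces the problem to the case in which~$X$ is smooth and projective over~$K$.

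For such~$X$, Hironaka's embedded resolution applied to a projective flat $R$-model produces a regular proper $R$-scheme~$\sX$ with generic fiber~$X$ and special fiber $\sX_0=\sum_i m_iY_i$ an SNC divisor, with smooth integral components~$Y_i$. Because~$k$ is algebraically closed, each~$Y_i$ contains a smooth $k$-point~$x$ of~$\sX$ lying on no other component; at~$x$ the uniformizer factors as $\pi=u\,t_i^{m_i}$ in~$\sO_{\sX,x}$, where $t_i$ is a local parameter cutting out~$Y_i$, and completing~$(t_i)$ to a regular system of parameters produces a closed subscheme of~$\sX$ finite flat over~$R$ of degree~$m_i$, whose generic fiber is a closed point of~$X$ of degree~$m_i$. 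Hence $\ind(X)\mid m:=\gcd_im_i$.

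Now extend~$E$ to a coherent sheaf~$\tilde E$ on~$\sX$ and divide by its $\pi$-torsion to ensure $R$-flatness; flat base change then gives $\chi(X,E)=\chi(\sX_0,\tilde E_0)$ with $\tilde E_0=\tilde E\otimes_Rk$. Regularity of~$\sX$ provides a finite locally free resolution of~$\tilde E$, so we may assume~$\tilde E$ is locally free. Writing $m_i=mn_i$ with $\gcd_in_i=1$ and $D:=\sum_in_iY_i$, so that $\sX_0=mD$ as Cartier divisors, the filtration of $\sO_{mD}$ with graded pieces $\sO_\sX(-jD)/\sO_\sX(-(j+1)D)\cong\sO_D(-jD)$ for $j=0,\dots,m-1$ yields, after tensoring with~$\tilde E$,
\begin{equation*}
\chi(\sX_0,\tilde E_0)=\sum_{j=0}^{m-1}\chi\bigl(D,\,\tilde E|_D\otimes L^{-j}\bigr),\qquad L:=\sO_D(D)\in\Pic(D).
\end{equation*}
Because $\sO_\sX(mD)=\sO_\sX(\sX_0)=\sO_\sX$ (trivialized by~$\pi$), $L^m=\sO_D$, so $L$ is $m$-torsion in $\Pic(D)$ and its Chern character acts as the identity on~$\CH_*(D)_\Q$ via cap product. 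Applying the Baum--Fulton--MacPherson Riemann--Roch transformation $\tau\colon K_0'(D)\to\CH_*(D)_\Q$, which satisfies $\tau(F\otimes L')=\ch(L')\cdot\tau(F)$ for line bundles~$L'$ and $\chi(D,F)=\deg\tau(F)_0$, yields $\tau(\tilde E|_D\otimes L^{-j})=\tau(\tilde E|_D)$ for every~$j$, whence $\chi(\sX_0,\tilde E_0)=m\cdot\chi(D,\tilde E|_D)$, divisible by~$m$ and hence by~$\ind(X)$.

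The hardest step is the final divisibility by~$m$ of~$\chi$ on the special fiber, because $D$ is typically non-reduced and singular, making classical Hirzebruch--Riemann--Roch on~$D$ unavailable; one is forced to invoke the Baum--Fulton--MacPherson extension of Riemann--Roch to singular proper schemes (or a comparable Grothendieck--Riemann--Roch argument on the regular ambient~$\sX$). The existence of an SNC model, on which the whole strategy rests, is itself secured by Hironaka's resolution and thus requires $\Char K=0$; in residue characteristic~$p>0$ this strategy breaks down, explaining the weaker prime-to-$p$ statement in the general theorem.
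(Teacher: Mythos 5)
Your argument is correct in substance, but it takes a genuinely different and heavier route than the paper at both decisive steps. For $\ind(X)\mid m$, you pass to an SNC model and construct, for each component~$Y_i$, a regular one-dimensional germ through a smooth $k$-point lying off the other components, asserting that it ``produces a closed subscheme of~$\sX$ finite flat over~$R$ of degree~$m_i$''; this is where the real work hides --- the germ must be globalized, its closure shown to be finite over~$R$, and Henselianness invoked to split off the local $R$-algebra at~$x$, of length~$m_i$, whose generic fiber is the desired zero-cycle. The paper simply quotes Lemma~\ref{lem:indexmult}~(i) (after \cite[\textsection9.1, Corollary~9]{BLR}), which packages exactly this and requires no SNC, nor even a regular model. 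For $m\mid\chi$, you invoke the Baum--Fulton--MacPherson transformation on the singular, non-reduced scheme~$D$, using that $L=\sO_D(D)$ is $m$-torsion in $\Pic(D)$ so that $\ch(L^{-j})$ acts as the identity on $\CH_*(D)_{\Q}$. This is valid, but your closing claim that one is \emph{forced} into BFM (or a GRR computation on~$\sX$) is mistaken: Proposition~\ref{prop:DivCartier} replaces all of it by Snapper's theorem --- $n\mapsto\chi(D,\sL^{\otimes n})$ is a polynomial, $\sL^{\otimes m}$ is trivial so the polynomial is $m$-periodic, hence constant, and the same telescoping filtration gives $\chi(Y,\sO_Y)=m\,\chi(D,\sO_D)$. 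That elementary device is the paper's key observation, and it is what allows Proposition~\ref{prop:DivCartier} to apply to any normal proper flat model, not only a regular SNC one. (You also first reduce to $E=\sO_X$ by d\'evissage and then reintroduce a general coherent~$\tilde E$ in the model argument; harmless, since your BFM computation handles it, but redundant.)
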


The assumption that~$k$ have characteristic~$0$ is required to
ensure that resolution of singularities holds for integral schemes of
finite type over~$R$.  For the arguments of Theorem~\ref{thm01} to go through in general,
it would suffice to know that
for any integral proper $K$\nobreakdash-scheme~$X$,
there is a normal proper flat $R$\nobreakdash-scheme~$\sX$ and a birational $K$\nobreakdash-morphism
$\sX \otimes_R K \to X$
such that the special fiber $\sX \otimes_R k$ is
divisible, as a Cartier divisor on~$\sX$, by its multiplicity as a Weil
divisor.
We cannot prove the existence of such models
when~$k$ has positive characteristic.
Nonetheless, using
Gabber's refinement of de Jong's
theorem on alterations, together with
a $K$\nobreakdash-theoretic d\'evissage and
the Hirzebruch--Riemann--Roch
theorem, we show:

\begin{IntroThm}[see Theorem~\ref{thm:Divisibility}]
\label{thm02}
Let~$R$ be a Henselian discrete valuation ring with quotient field~$K$ and algebraically closed residue field~$k$.
Let~$X$ be a smooth proper scheme over~$K$.
Assume~$k$ has characteristic $p>0$.
Then, for any coherent sheaf~$E$ on~$X$, the prime-to\nobreakdash-$p$ part of the index of~$X$ over~$K$ divides $\chi(X,E)$.
If in addition~$p>\dim(X)+1$, the index of~$X$ over~$K$ divides $\chi(X,E)$.
\end{IntroThm}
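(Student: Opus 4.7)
The plan is to prove the divisibility one prime at a time, by induction on $\dim(X)$, handling primes $\ell \neq p$ via Gabber's alteration theorem and the prime~$p$ separately under the hypothesis $p > \dim(X)+1$. The zero-dimensional case is immediate. The first reduction is K-theoretic: using smoothness of $X$ to resolve $E$ by vector bundles, the codimension filtration on $G_0(X)$ yields a congruence
\[
[E] \equiv (\mathrm{rk}\, E)\cdot[\sO_X] \pmod{G_0(X)^{\geq 1}},
\]
where $G_0(X)^{\geq 1}$ is generated by classes of coherent sheaves supported on proper closed subschemes $Z \subsetneq X$. Since $\ind(X) \mid \ind(Z)$, the inductive hypothesis makes each contribution $\chi(Z, G)$ divisible by $\ind(X)$, so it suffices to show $\ind(X) \mid \chi(X, \sO_X)$.

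Now fix a prime $\ell \neq p$. I would apply Gabber's refinement of de Jong's alteration theorem to obtain a proper generically finite morphism $f \colon Y \to X$ of degree $d$ coprime to $\ell$, together with a proper flat regular $R$-scheme $\sY$ whose generic fiber is $Y$ and whose special fiber is a strict normal crossings divisor $\sY_k = \sum_i m_i D_i$. Combining Leray for $f$ with the previous d\'evissage applied to the higher direct images $R^q f_* \sO_Y$ (supported in codimension~$\geq 1$) yields
\[
\chi(Y, \sO_Y) \equiv d\cdot\chi(X, \sO_X) \pmod{\ind(X)},
\]
so the $\ell$-part of the divisibility for $X$ follows from that for $Y$. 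Any smooth point of $\sY$ lying on exactly one $D_i$ produces a $0$-cycle of degree $m_i$ on $Y$, so $\ind(Y) \mid \gcd_i m_i$, while flatness of $\sY/R$ gives $\chi(Y, \sO_Y) = \chi(\sY_k, \sO_{\sY_k})$. The problem therefore reduces to the claim that $\gcd_i m_i$ divides $\chi(\sY_k, \sO_{\sY_k})$.

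For this claim, I would filter $\sO_{\sY_k}$ according to the SNC stratification: the ideal sheaves $\sO_\sY(-\sum_i j_i D_i)$ provide a filtration whose graded pieces are line bundles on the smooth strata $D_I = \bigcap_{i \in I} D_i$. Applying Hirzebruch--Riemann--Roch on each $D_I$ then expresses $\chi(\sY_k, \sO_{\sY_k})$ as a combination of HRR integrals weighted by monomials in the $m_i$. The main obstacle I expect is the bookkeeping needed to witness divisibility by $\gcd_i m_i$ in this expression. A cleaner route, likely the authors', is a $K$-theoretic computation directly on the regular scheme $\sY$: since $\sY_k$ is principal (cut out by~$\pi$), one has $[\sO_{\sY_k}] = 0$ in $K_0(\sY)$, and exploiting this vanishing together with the $\gamma$-filtration and proper pushforward to $\Spec R$ should organize the component contributions so that the multiplicities factor out and the divisibility by $\gcd_i m_i$ manifests itself without an explicit intersection-theoretic grind.

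The extra assertion when $p > \dim(X)+1$ would follow by observing that the denominators appearing in the Todd classes of vector bundles on an $n$-dimensional smooth variety have only prime factors $\leq n+1$. Under this hypothesis every denominator in the preceding HRR or $K$-theoretic argument is coprime to $p$, so the entire argument can be carried out $p$-integrally, yielding divisibility by the $p$-part of $\ind(X)$ as well, hence by the full index.
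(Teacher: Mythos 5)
Your overall architecture matches the paper's (Gabber alteration for $\ell\neq p$, a regular model for the divisibility by multiplicities, HRR denominator bounds for the prime~$p$), but the central claim is left unproved and your two sketches for it both run into trouble. The step you defer --- that $\gcd_i m_i$ divides $\chi(\sY_k,\sO_{\sY_k})$ --- is exactly Proposition~\ref{prop:DivCartier} in the paper, and the clean argument is not the one you guess. Since $\sY$ is regular, the special fiber $\sY_k$, being a Weil divisor divisible by $m=\mult(\sY_k)$, is also divisible by~$m$ as a \emph{Cartier} divisor: $\sY_k=mD$. One then filters $\sO_{\sY_k}$ by the ideal sheaves $\sO_\sY(-jD)$ and uses Snapper's theorem: $\chi(D,\sL^{\otimes n})$ is a polynomial in $n$, and $\sL^{\otimes m}$ is trivial because $mD$ is principal, so this polynomial is periodic hence constant, giving $\chi(\sY_k,\sO_{\sY_k})=m\,\chi(D,\sO_D)$. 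No SNC structure and no intersection-theoretic ``bookkeeping'' are needed. Your second suggested route has a genuine flaw: $[\sO_{\sY_k}]=0$ in $K_0(\sY)$ pushes forward to $0$ in $G_0(R)\cong\Z$ (rank at the generic point), which is vacuous; the quantity you need is the pushforward to $G_0(k)$ of $[\sO_{\sY_k}]\in K_0(\sY_k)$, and the vanishing in $K_0(\sY)$ does not descend to $K_0(\sY_k)$ because the derived restriction $Li^*i_*\sO_{\sY_k}$ picks up a Tor term. So that shortcut does not close the gap.

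Two further issues. First, your induction is stated for smooth proper $X$, but the sheaves you push into lower codimension (both in the $G_0(X)^{\geq 1}$ devissage and in the higher direct images $R^qf_*\sO_Y$) live on closed subschemes that are typically singular; the inductive hypothesis as stated does not apply to them. The paper avoids this by proving the alteration part (Theorem~\ref{th:B}) for \emph{arbitrary} proper $K$\nobreakdash-schemes, reducing there to $\sO_Z$ for integral closed $Z$ rather than by a rank argument requiring smoothness, and only invoking smoothness in the separate HRR statement (Proposition~\ref{prop:RR}). Second, for $p>\dim(X)+1$ your phrase ``the entire argument can be carried out $p$\nobreakdash-integrally'' does not hold: Gabber's alteration degree is prime to a chosen $\ell\neq p$ but is in general divisible by $p$, so the alteration step cannot be made $p$\nobreakdash-integral. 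What does work, and is what the paper does, is to drop the model entirely for the $p$\nobreakdash-part and apply HRR directly to $X$ over~$K$ (Proposition~\ref{prop:RR}): the Todd and Chern character denominators divide $(\dim X+1)!$, so the prime-to-$(\dim X+1)!$ part of $\ind(X)$ always divides $\chi(X,E)$, which handles the $p$\nobreakdash-part precisely when $p>\dim(X)+1$; combining this with the prime-to-$p$ statement yields divisibility by the full index.
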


(By the \emph{prime-to\nobreakdash-$p$ part of~$N$}, we mean the largest divisor
of~$N$ which is prime to~$p$.)
When~$R$ is excellent, the smoothness assumption in Theorem~\ref{thm02} may be
removed (see Theorem~\ref{th:B} and Remark~\ref{rk:haution}).
Coming back to the motivation for our work,
we deduce from Theorem~\ref{thm02}:

\begin{IntroCor}[see Corollary~\ref{cor:RC}]
\label{RC}
The index of a rationally connected variety~$X$ over~$\Q_p^{\mathrm{nr}}$,
or more generally over
the quotient field of a Henselian discrete valuation ring of characteristic~$0$ with algebraically closed residue field of characteristic $p>0$,
is a power of~$p$.
It is~$1$
if in addition $p>\dim(X)+1$.
\end{IntroCor}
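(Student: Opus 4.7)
The plan is to deduce the corollary directly from Theorem~\ref{thm02} (specifically its refinement proved as Theorem~\ref{thm:Divisibility}) applied to the structure sheaf $E = \sO_X$.

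First I would reduce to the case where $X$ is smooth and proper, which is the setting of Theorem~\ref{thm02}. Since a rationally connected variety in the usual sense is assumed smooth and proper, nothing is needed here beyond recording the hypotheses: $R$ is a Henselian DVR of characteristic~$0$, hence $K$ has characteristic~$0$, while the residue field~$k$ has characteristic $p>0$.

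The key input is the computation $\chi(X,\sO_X)=1$. Since~$K$ has characteristic~$0$, for a smooth proper rationally connected $K$\nobreakdash-variety~$X$ one has $H^0(X,\Omega^i_{X/K})=0$ for all $i>0$: this is a birational invariant, and it holds for $\P^n_K$, hence for any variety $K$\nobreakdash-birational to a rationally connected one after base change to an algebraic closure. By Hodge symmetry in characteristic zero (applied after extending scalars to $\overline K$, which does not affect coherent Euler--Poincar\'e characteristics), one obtains $H^i(X,\sO_X)=0$ for all $i>0$, and therefore
\[
\chi(X,\sO_X)=\dim_K H^0(X,\sO_X)=1.
\]

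Now Theorem~\ref{thm02} (which is applicable since $X$ is smooth proper over~$K$ and~$k$ has characteristic $p>0$) asserts that the prime-to\nobreakdash-$p$ part of the index of~$X$ over~$K$ divides $\chi(X,\sO_X)=1$, which forces the index to be a power of~$p$. If moreover $p>\dim(X)+1$, the same theorem yields that the full index divides~$1$, so the index equals~$1$. There is no real obstacle here; the entire substance lies in Theorem~\ref{thm02}, while this corollary is a bookkeeping consequence once the vanishing $H^i(X,\sO_X)=0$ for $i>0$ is invoked.
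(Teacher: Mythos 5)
Your overall approach is exactly the paper's: compute $\chi(X,\sO_X)=1$ for a smooth proper rationally connected $K$\nobreakdash-variety and apply Theorem~\ref{thm:Divisibility} with $E=\sO_X$. The paper packages the same deduction via Corollary~\ref{cor:chi1} (which records the case $|\chi(X,\sO_X)|=1$ explicitly), but this is only bookkeeping; you have the right route.

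There is, however, a gap in your justification of the vanishing $H^0(X,\Omega^i_{X/K})=0$ for $i>0$. You argue that this is a birational invariant, that it holds for $\P^n_K$, and hence that it holds for any variety birational to a rationally connected one. But rationally connected is strictly weaker than rational: a smooth cubic threefold, for instance, is rationally connected but not birational to $\P^3$, so birational invariance of $h^{0,i}$ does not by itself give the vanishing for rationally connected $X$. The correct statement is that a smooth proper rationally connected variety in characteristic zero satisfies $H^0(X,(\Omega^1_X)^{\otimes m})=0$ for all $m\geq 1$; this is proved via the theory of free rational curves (a free $\P^1\to X$ pulls $T_X$ back to a globally generated bundle of nonnegative degree), see \cite[Ch.~IV]{KollarBook}, and it yields $H^0(X,\Omega^i_X)=0$ for $i>0$ since $\Omega^i_X$ is a summand of $(\Omega^1_X)^{\otimes i}$. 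Once this is in place, your use of Hodge symmetry (over $\overline K$, invariance of $\chi$ under base extension) to conclude $\chi(X,\sO_X)=1$, and the subsequent appeal to Theorem~\ref{thm:Divisibility}, are both correct.
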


Theorems~\ref{thm01} and~\ref{thm02} also have a number of unexpected consequences.
For instance, we prove that if~$X$ is a variety of general type
over $\C((t))$, or over the maximal unramified extension of a
$p$\nobreakdash-adic field with $p>\dim(X)+1$, then the index of~$X$
divides the plurigenera $P_n(X)$ for $n \geq 2$
(Example~\ref{ex:generaltypesurface}, Corollary~\ref{cor:generaltype}).
In another direction,
if $K=\C((t))$ or~$K$ is the maximal unramified
extension of a $p$\nobreakdash-adic field with $p\geq 5$,
then hypersurfaces of degree~$6$ in $\P^3_K$ have index~$1$.
More generally, in Section~\ref{sec:Hyp}, we produce, for any~$d$ and~$N$,
an optimal bound on the index of a hypersurface of degree~$d$ in $\P^N$ over such a field
(Theorem~\ref{thm:hypersurf}, Proposition~\ref{prop:twistedfermat}).
In particular,
if $d=\prod p_i^{\alpha_i}$ is the prime factorization of~$d$,
the property ``every hypersurface of degree~$d$ in $\P^N$ over $\C((t))$ contains a zero-cycle of degree~$1$''
holds if and only if $\max(p_1^{\alpha_1},\dots,p_n^{\alpha_n}) \leq N$
(Example~\ref{ex:hypcont}).  This should be compared with Lang's theorem according to which $\C((t))$ is a~$(C_1)$ field.

Let us stress the geometric content of such statements: they imply in
particular that if $f:X \to S$ is a dominant morphism between smooth
projective complex varieties and if the geometric generic fiber of~$f$ is
an irreducible variety of general type (resp., is a sextic surface), then
the multiplicities of the codimension~$1$ fibers of~$f$ divide the higher plurigenera of
the generic fiber (resp., the morphism $f$ has no multiple fiber).

\bigskip
Although we use a slightly different method here, Theorem~\ref{thm01} may be proved using
only properties of $K$\nobreakdash-theory as an oriented cohomology theory (in the sense of \cite[Definition 1.1.2]{AlgCobord}), which,
assuming~$K$ is a subfield of~$\C$, suggests that the class of $X(\C)$ in the complex cobordism ring
$\pi_*(MU)$ should be divisible by the index of~$X$ over~$K$.
The goal of Section~\ref{sec:Cobordism} is to show that this is indeed the case, and to prove a similar statement without any assumption on the characteristic of~$K$.

As it turns out, when~$K$ is a subfield of~$\C$,
one can give a purely algebraic description of the complex cobordism class of $X(\C)$.
It has long been known (see, \emph{e.g.}, \cite[Chapter I]{Stong})
that
 the complex cobordism ring $\pi_*(MU)$
is canonically isomorphic to
the graded subring~$\L$ of the infinite polynomial ring
$\Z[b_1,b_2,\ldots]$ (with $\deg(b_i)=i$) spanned by the polynomials
$b(X)=\sum_{|I|=\dim(X)} \deg(c_I(-T_X))b^I$ as~$X$ runs over smooth
proper varieties over~$\C$, and
that~$b(X)$ is equal to the cobordism class of~$X(\C)$ via this
isomorphism.  Here~$T_X$ denotes the tangent bundle of~$X$ and~$c_I$
is the $I$th Conner--Floyd Chern class.
When~$K$ is an arbitrary field, we take this description as our point of departure and define the cobordism ring of $\Spec(K)$
to be the subring~$\L_K$ of $\Z[b_1,b_2,\ldots]$ spanned by the polynomials
$b_K(X)=\sum_{|I|=\dim(X)} \deg(c_I(-T_X))b^I$ when~$X$ runs over smooth
proper varieties over~$K$. This subring is in fact equal to~$\L$, according to
Merkurjev~\cite[Theorem~8.2]{MerkurjevOrient}.
Under the hypotheses of Theorem~\ref{thm02}, we may ask whether the cobordism class $b_K(X)$
is divisible, in~$\L_K$, by the index of~$X$ over~$K$.
In this direction, we show:

\begin{IntroThm}[see Theorem~\ref{thm:Cobordism}]
\label{thm04}
Let~$R$ be a Henselian discrete valuation ring with
quotient field~$K$, and algebraically closed residue field~$k$ of characteristic $p\geq 0$.
Let~$X$ be a smooth proper irreducible $K$\nobreakdash-scheme.
If $p=0$ or $p>\dim(X)+1$, then $b_K(X)$ is divisible, in the ring~$\L_K$, by the index of~$X$ over~$K$.
If $p>0$, then $b_K(X)$ is divisible, in~$\L_K$, by the prime-to\nobreakdash-$p$ part of the index of~$X$ over~$K$.
\end{IntroThm}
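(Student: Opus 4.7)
The plan is to combine Theorems~\ref{thm01}--\ref{thm02}, which assert divisibility of Euler characteristics of coherent sheaves, with the Hattori--Stong theorem, which characterizes the Lazard ring~$\L$ inside $\Z[b_1,b_2,\ldots]$ via certain $K$-theoretic integrality conditions. By Merkurjev's equality $\L_K=\L$ (invoked in the discussion preceding Theorem~\ref{thm04}), it suffices to show that $b_K(X)/n$, \emph{a priori} only an element of $\Q[b_1,b_2,\ldots]$, in fact belongs to~$\L$; here~$n$ denotes the index of~$X$ when $p=0$ or $p>\dim(X)+1$, and its prime-to-$p$ part otherwise.

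The version of Hattori--Stong I would invoke characterizes $\L$ inside $\Q[b_1,b_2,\ldots]$ as the subring consisting of those rational polynomials~$\beta$ all of whose $K$-theoretic characteristic numbers take integer values. When $\beta=b_K(X)$ for a smooth proper variety~$X$, the Hirzebruch--Riemann--Roch theorem identifies these characteristic numbers with the Euler--Poincar\'e characteristics $\chi(X,V)$, as~$V$ runs over the natural family of virtual vector bundles on~$X$ obtained by applying universal polynomial operations (tensor, symmetric, exterior powers) to~$T_X$. Since every virtual bundle is a formal difference of two honest vector bundles, and vector bundles are in particular coherent sheaves, Theorem~\ref{thm02} (or Theorem~\ref{thm01} when $p=0$) shows that~$n$ divides $\chi(X,V)$ for every such~$V$. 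Applying the Hattori--Stong criterion to $b_K(X)/n$ then shows that all its $K$-theoretic characteristic numbers are still integers, whence $b_K(X)/n\in\L=\L_K$.

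The principal obstacle I foresee lies not in this last deduction but in setting up Hattori--Stong within the algebraic framework of the paper: one must formulate a version of the theorem whose integrality conditions simultaneously refer to the abstractly defined subring $\L_K\subset\Z[b_1,b_2,\ldots]$ and translate, via Hirzebruch--Riemann--Roch, into Euler characteristics of genuine coherent sheaves on~$X$ to which Theorem~\ref{thm02} directly applies. Once this bridge between the topological (or algebraic-cobordism-theoretic) ring~$\L$ and the $K$-theory of~$X$ is established, the divisibility assertion follows immediately from Theorem~\ref{thm02} by the argument sketched above.
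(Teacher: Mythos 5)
Your proposal is correct and is essentially the paper's own proof: the ``bridge'' you flag is exactly what Lemma~\ref{lem:CobordSf} (Hirzebruch--Riemann--Roch identifying $\langle S_f, b_K(X)\rangle$ with $\chi\big(X,\bigotimes_i (\bigwedge^i T_X)^{\otimes m_i}\big)$) and Proposition~\ref{prop:CobordConclusion} (Hattori--Stong, giving $\L=\L_K=\L'$) supply, after which divisibility of $b_K(X)$ by~$n$ in $\L_K$ follows from Theorem~\ref{thm:Divisibility} exactly as you sketch. The only small difference is that the paper rederives $\L=\L_K$ as a byproduct of Hattori--Stong (via $I'\subseteq I_K\subseteq I$ together with $I'=I$) rather than citing Merkurjev's independent proof, a purely expository choice.
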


Since a full theory of algebraic cobordism is only available in
characteristic zero, we use another method to prove Theorem~\ref{thm04},
namely the Hattori--Stong theorem \cite[Theorem 1]{StongRR}, \cite[Theorem~I]{Hattori}, which allows one to use $K$\nobreakdash-theory to compute
cobordism; thus Theorem~\ref{thm04} becomes a consequence of Theorem~\ref{thm02}.

From Theorem~\ref{thm04} and from the well-known fact that $\pi_*(MU)$ is generated
by the classes of projective spaces $\P^n$ ($n \geq 1$) and Milnor hypersurfaces $H_{m,n}$ (hypersurfaces
of bidegree $(1,1)$ in $\P^m \times \P^n$, with $2\leq m\leq n$), we deduce the following concrete consequence regarding integral-valued rational characteristic classes:

\begin{IntroCor}[see Corollary~\ref{cor:Cobordismcn}]
\label{cor05}
Let~$R$ be a Henselian discrete valuation ring with
quotient field~$K$, and algebraically closed residue field~$k$ of characteristic $p\geq 0$.
Let $d \geq 1$ and let $P \in \Q[c_1,\dots,c_d]$ be homogeneous of degree~$d$ with respect to the grading $\deg(c_i)=i$.
Assume that $\deg(P(c_1(T_X),\dots,c_d(T_X)))\in \Z$ for any $d$\nobreakdash-dimensional product~$X$ of complex projective spaces and Milnor hypersurfaces.
Then, for any smooth proper irreducible $K$\nobreakdash-scheme $X$ of dimension~$d$, the rational number $\deg(P(c_1(T_X),\dots,c_d(T_X)))$ is an integer.
If $p=0$ or $p>\dim(X)+1$, this integer is divisible by the index of~$X$ over~$K$.
If $p>0$, it is divisible by the prime-to\nobreakdash-$p$ part of the index of~$X$ over~$K$.
\end{IntroCor}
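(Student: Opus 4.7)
The plan is to deduce the corollary from Theorem~\ref{thm04} by realising the Chern number $\deg(P(c_1(T_X),\dots,c_d(T_X)))$ as the value of a single $\Q$-linear functional, defined universally on the graded ring $\Q[b_1,b_2,\dots]$, applied to the cobordism class $b_K(X)$; the hypothesis of the corollary will then be shown to force that functional to be integer-valued on all of~$\L_K$.

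First, I would construct a $\Q$-linear functional $\psi_P : \Q[b_1,b_2,\dots]_d \to \Q$ (where the subscript~$d$ denotes the weighted-degree-$d$ summand, with $\deg(b_i)=i$) such that $\psi_P(b_K(Y)) = \deg(P(c_1(T_Y),\dots,c_d(T_Y)))$ for every smooth proper $K$\nobreakdash-scheme~$Y$ of dimension~$d$. To do so, observe that the Whitney sum formula applied to the trivial virtual bundle $T_Y \oplus (-T_Y)$ expresses each ordinary Chern class $c_i(T_Y)$ as a universal integer polynomial in the $c_j(-T_Y)$'s; taking degrees, every ordinary Chern number $\deg(c_1(T_Y)^{i_1}\cdots c_d(T_Y)^{i_d})$ becomes a universal $\Z$\nobreakdash-linear combination of the Conner--Floyd Chern numbers $\deg(c_J(-T_Y))$ with $|J|=d$, which are precisely the coefficients of $b_K(Y)$ in the monomial basis $(b^J)$. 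Combining with the rational coefficients of~$P$ yields universal rationals $a_J\in\Q$ for which the $\Q$-linear map defined by $\psi_P(b^J)=a_J$ has the desired property, and by construction this $\psi_P$ is a fixed functional independent of~$Y$.

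Second, I would invoke the classical theorem of Milnor that $\L=\pi_*(MU)$ is a polynomial ring over~$\Z$ with generators in every positive degree which can be chosen among the classes of projective spaces and Milnor hypersurfaces, so that monomials in such classes form a $\Z$-basis of~$\L_d$. Via the identification $\L_K=\L$ due to Merkurjev and recalled in the introduction, the hypothesis of the corollary says exactly that $\psi_P$ is integer-valued on a $\Z$-basis of $(\L_K)_d$, hence on all of $(\L_K)_d$. Applying Theorem~\ref{thm04} then yields a factorisation $b_K(X)=n\cdot y$ in~$\L_K$ for some $y$, where~$n$ is the index of~$X$ over~$K$ when $p=0$ or $p>\dim(X)+1$ and the prime-to\nobreakdash-$p$ part of the index otherwise. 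Evaluating~$\psi_P$ on both sides produces
\[\deg(P(c_1(T_X),\dots,c_d(T_X))) = \psi_P(b_K(X)) = n\cdot \psi_P(y) \in n\Z,\]
which delivers at once the integrality of the Chern number and the claimed divisibility by~$n$.

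The subtlest point is the first step: one has to check that the rational coefficients of~$P$ combine with the universal integer transition matrices between ordinary and Conner--Floyd Chern numbers of virtual bundles so as to produce a single, well-defined $\Q$-linear functional on the whole space $\Q[b_1,b_2,\dots]_d$ which simultaneously computes $\deg(P(\dots))$ from $b_K(Y)$ for every~$Y$. Once this essentially combinatorial bookkeeping is carried out, Milnor's generation theorem and Theorem~\ref{thm04} combine to give the corollary without further difficulty.
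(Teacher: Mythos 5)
Your proof is essentially correct and follows the same route as the paper: the functional $\psi_P$ you describe is exactly what the paper builds explicitly by setting $Q=P(s_1,\dots,s_d)$ with $s_i$ the Segre polynomials (which encode precisely the Whitney-sum conversion between ordinary Chern classes of $T_X$ and Conner--Floyd Chern classes of $-T_X$) and then pairing via $\langle Q,\cdot\rangle$; the integrality on $\L_d$, the identification $\L_K=\L$, and the application of Theorem~\ref{thm04} are as in the paper. One small inaccuracy: Milnor's theorem does \emph{not} say that polynomial generators of $\L=\pi_*(MU)$ can be chosen among classes of projective spaces and Milnor hypersurfaces, nor that monomials in those classes form a $\Z$\nobreakdash-basis of $\L_d$ --- they merely generate $\L$ as a ring, so the relevant monomials span $\L_d$ over $\Z$ but are far from linearly independent; fortunately spanning is all your argument needs, so the conclusion stands.
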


At the end of Section~\ref{sec:Cobordism} we list examples of such
polynomials, such as $\frac{1}{2} c_d$ or $\frac{1}{2} c_1^d$ if~$d$ is
odd.  As pointed out by Merkurjev in \cite[p.~8]{MerkurjevSteenrodOps},
properties of Brosnan's Steenrod operations on the mod $q$ Chow groups show
that, for a given prime number~$q$, the characteristic class $X\mapsto \frac{1}{q} \deg(c_I(-T_X))$
is integral-valued as long as $I=(\alpha_j)_{j \geq 1}$ with $\alpha_j=0$ whenever~$j$ is not of the form $q^n-1$ for some natural number~$n$.
This gives a large supply of integral-valued rational characteristic classes that are not expressible as
$\Z$\nobreakdash-linear combinations of monomials in the Chern classes of the tangent bundle.

Although the Hattori--Stong theorem tells us that each integral-valued
characteristic class $X\mapsto \deg(P(c_1(T_X),\dots,c_d(T_X)))$ on $d$\nobreakdash-dimensional smooth
proper varieties is given as the Euler--Poincar\'e characteristic of $\rho(T_X)$
for some virtual representation~$\rho$ of $\GL_d$, we are not aware of any
general and explicit formulas for~$\rho$ in terms of~$P$.  While one can
find a~$\rho$ for the characteristic class $\frac{1}{2} c_d$ (for odd~$d$), one
cannot do this so easily for other classes, for example for $\frac{1}{2}c_1^d$. The same seems to be the case for the series of characteristic
classes $X\mapsto \frac{1}{q} \deg(c_I(-T_X))$ mentioned above.  Thus
Theorem~\ref{thm04} yields nontrivial divisibility by the index for
integral-valued rational characteristic classes which, at least as a
practical matter, goes beyond Theorem~\ref{thm02}.

\bigskip
{\it Acknowledgements.}
We thank Markus Rost and Alexander Merkurjev for directing us to the Hattori--Stong theorem and for their comments and suggestions,
\mbox{Jean-Louis} Colliot-Th\'el\`ene for pointing out the reference~\cite{Atiyah}, Johannes Nicaise for his interest and for discussions on the topic of this note,
and the referee for bringing the paper~\cite{Haution} to our attention.

\bigskip
{\it Notation.} 
If~$N$ and~$n$ are integers, the \emph{prime-to\nobreakdash-$N$ part of~$n$} is the largest integer which divides~$n$ and is prime to~$N$
(or~$0$ if $n=0$).
Let~$X$ be a scheme of finite type over a field~$K$.
If~$X$ is smooth over~$K$, we denote by~$T_X$ the tangent bundle of~$X$, \emph{i.e.}, the locally free sheaf dual to $\Omega^1_{X/K}$.
If~$X$ is proper over~$K$ and~$E$ is a coherent sheaf on~$X$, we denote the Euler--Poincar\'e characteristic of~$E$ by
$\chi(X, E)=\sum_{i\ge0}(-1)^i\dim_K H^i(X, E)$.
Finally, if~$R$ is a discrete valuation ring with quotient field~$K$,
a \emph{model} of~$X$ over~$R$ is a flat $R$\nobreakdash-scheme of finite type with generic fiber~$X$.

\smallskip\section{Index of smooth proper schemes over arbitrary fields}
\label{sec:all_fields}

\begin{Def}
Let~$X$ be a scheme of finite type over a field~$K$.  The \emph{index of~$X$ over~$K$},
denoted $\ind(X)$, is the greatest common divisor of the degrees $[K(x):K]$ of the closed points~$x$ of~$X$.
\end{Def}

The index of~$X$ over~$K$ is also characterized by the fact that it generates the subgroup $\deg(Z_0(X)) \subseteq \Z$.
By the covariant functoriality of~$Z_0(X)$,
it follows that $\ind(X)$ divides $\ind(Y)$ for any morphism $Y \to X$ of schemes of finite type over~$K$.

\begin{prop}
\label{prop:RR}
Let~$X$ be a smooth proper $K$\nobreakdash-scheme of dimension~$d$
and~$E$ be a coherent sheaf on~$X$.
Then the prime-to\nobreakdash-$(d+1)!$ part of $\ind(X)$ divides $\chi(X,E)$.
\end{prop}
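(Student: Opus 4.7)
The plan is to deduce Proposition~\ref{prop:RR} from the Hirzebruch--Riemann--Roch formula, combined with a classical $\ell$\nobreakdash-integrality analysis of the Chern character and the Todd class. Applying HRR to the structure morphism $f\colon X \to \Spec(K)$ gives
\[
\chi(X,E) = \deg\bigl((\ch(E)\cdot\td(T_X))_d\bigr),
\]
the subscript denoting the codimension-$d$ component, regarded as an element of $\CH_0(X)\otimes\Q$. Since $\ind(X)$ generates the image of $\deg\colon\CH_0(X)\to\Z$, any rational zero-cycle whose coefficients are $\ell$\nobreakdash-integral has degree divisible by the $\ell$\nobreakdash-primary part of $\ind(X)$. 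It therefore suffices, for each prime $\ell>d+1$, to show that $\alpha:=(\ch(E)\cdot\td(T_X))_d$ already lies in $\CH_0(X)\otimes\Z_{(\ell)}$.

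For such a prime~$\ell$, I would prove $\ell$\nobreakdash-integrality factor by factor. The Chern character in codimension $i\leq d$ is $\frac{1}{i!}$ times an integer polynomial in the $c_j(E)$ (via Newton's identity), and $i!$ is a unit in $\Z_{(\ell)}$ because $\ell>d\geq i$. For the Todd class, the splitting principle together with the generating function
\[
\frac{x}{1-e^{-x}} \;=\; \sum_{n\geq 0}(-1)^n\frac{B_n}{n!}x^n
\]
reduces the question to the $\ell$\nobreakdash-integrality of $B_n/n!$ for $n\leq d$. Again $\ell\nmid n!$, and the von~Staudt--Clausen theorem identifies the denominator of~$B_n$ as the product of the primes~$p$ with $(p-1)\mid n$; since $\ell-1\geq d+1>n$ for every relevant~$n$, the prime~$\ell$ does not occur. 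Hence $\td(T_X)$ is $\ell$\nobreakdash-integral through codimension~$d$, and the claim follows.

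Granted the $\ell$\nobreakdash-integrality of~$\alpha$, I would conclude by writing $\alpha=\beta/M$ with $\beta\in\CH_0(X)$ and $M$ a positive integer whose prime factors are all at most $d+1$. Then $M\chi(X,E)=\deg(\beta)$ is divisible by $\ind(X)$, so the $\ell$\nobreakdash-adic valuation of $\chi(X,E)$ is at least that of $\ind(X)$ for every prime $\ell>d+1$. Combining over all such~$\ell$ yields the stated divisibility by the prime-to\nobreakdash-$(d+1)!$ part of $\ind(X)$.

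The only genuinely nontrivial step is the denominator estimate for the Todd class, which rests on von~Staudt--Clausen; the rest is bookkeeping. The bound is essentially sharp: the top Todd class of $\P^d$ is $\frac{1}{(d+1)!}$ times the class of a point, so HRR on its own cannot extract any divisibility at primes $\leq d+1$, which is precisely why the sharper Theorems~\ref{thm01} and~\ref{thm02} require geometric input from suitable models of~$X$ over~$R$.
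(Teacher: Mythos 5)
Your proposal is correct and follows essentially the same route as the paper: HRR plus a denominator analysis of $\ch(E)\cdot\td(T_X)$, reducing the divisibility claim to the $\ell$\nobreakdash-integrality of the truncated Chern and Todd power series for $\ell>d+1$. The only cosmetic difference is that you justify the Todd denominator bound explicitly via von~Staudt--Clausen, whereas the paper isolates the same bound as a lemma (Lemma~\ref{lem:RRdenom}) and asserts it from the shape of $\prod_j \xi_j/(1-e^{-\xi_j})$ without naming the theorem.
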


\begin{proof}
As~$X$ is regular and separated, the Grothendieck group of coherent sheaves on~$X$ is generated by the classes of locally free sheaves.
Thus, we may assume that~$E$ is locally free.
According to the Hirzebruch--Riemann--Roch theorem,
we then have an equality of rational numbers
\begin{align}
\label{eq:RR}
\chi(X,E) = \epsilon_*\big(\ch(E) \cdot \td(T_X)\big) \in \CH^*(\Spec(K)) \otimes_\Z \Q =\Q\rlap{\text{,}}
\end{align}
where $\epsilon:X \to \Spec(K)$ is the structure morphism of~$X$,
and $\ch(E)$, $\td(T_X) \in \CH^*(X) \otimes_\Z \Q$ respectively denote the Chern character of~$E$ and the Todd class of the tangent bundle~$T_X$ of~$X$
(see \cite[Corollary~15.2.1]{Fulton}).

\begin{lem}
\label{lem:RRdenom}
Let~$X$ be a smooth proper irreducible $K$\nobreakdash-scheme of dimension~$d$ and 
let~$E$ be a vector bundle on~$X$.
The Chern character $\ch(E) \in \CH^*(X) \otimes_\Z \Q$ belongs to the image of $\CH^*(X) \otimes_\Z \Z[1/d!]$.
The Todd class $\td(E) \in \CH^*(X) \otimes_\Z \Q$ belongs to the image of $\CH^*(X) \otimes_\Z \Z[1/(d+1)!]$.
\end{lem}

\begin{proof}
Let~$r$ denote the rank of~$E$.
Let $\ch_{r,d} \in \Q[\xi_1,\dots,\xi_r]$ 
(resp., $\td_{r,d} \in \Q[\xi_1,\dots,\xi_r]$)
denote the degree~$d$ polynomial obtained by truncating the formal power series $\sum_{j=1}^r \exp(\xi_j)$
(resp., 
 $\prod_{j=1}^r \frac{\xi_j}{1-\exp(-\xi_j)}$).
This polynomial has coefficients in $\Z[1/d!]$ (resp., in $\Z[1/(d+1)!]$),
has degree~$d$,
and is invariant under all permutations of the~$\xi_j$'s.  Therefore
it belongs to the subring $\Z[1/d!][c_1,\dots,c_d]$
(resp., $\Z[1/(d+1)!][c_1,\dots,c_d]$) of $\Q[\xi_1,\dots,\xi_r]$,
where $c_1,\dots,c_r$ denote the elementary symmetric polynomials in the $\xi_j$'s
(with the convention that $c_i=0$ if $i>r$).
Now the Chern character of~$E$ (resp., the Todd class of~$E$) is, by definition, the element of $\CH^*(X) \otimes_\Z \Q$ obtained
by applying $\ch_{r,d}$ (resp., $\td_{r,d}$) to the Chern classes $c_i(E)\in \CH^*(X)$ of~$E$; hence the lemma.
\end{proof}

Thanks to Lemma~\ref{lem:RRdenom}, it follows from~\eqref{eq:RR} that $\chi(X,E)$ belongs to the image of
$\epsilon_*:\CH^*(X) \otimes_{\Z} \Z[{1}/{(d+1)!}] \to \CH^*(\Spec(K)) \otimes_\Z \Z[{1}/{(d+1)!}]=\Z[{1}/{(d+1)!}]$.
In other words, there exist an integer $m \geq 0$ and a zero-cycle~$z$ on~$X$ such that
$$((d+1)!)^m \chi(X,E)=\deg(z)\rlap{\text{,}}$$
which proves the proposition.
\end{proof}

\begin{rem}
\label{rk:haution}
Haution~\cite[Theorem~5.1~(ii)]{Haution} has proved that Proposition~\ref{prop:RR} remains valid without any smoothness assumption on~$X$.
\end{rem}

\smallskip\section{Index of the generic fiber of a regular proper scheme over a Henselian discrete valuation ring}

The goal of this section is to prove the following theorem:

\begin{thm}
\label{th:dividesregular}
Let~$R$ be a Henselian discrete valuation ring with quotient field~$K$ and algebraically closed residue field~$k$.
Let~$X$ be a regular proper $K$\nobreakdash-scheme.
Assume~$X$ admits a regular proper model over~$R$.
Then $\ind(X)$ divides $\chi(X,\sO_X)$.
\end{thm}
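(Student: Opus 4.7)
The plan is to compute $\chi(X, \sO_X)$ on the special fiber of~$\sX$ and exploit the Cartier structure afforded by regularity. By flatness of $\sX/R$, the Euler characteristic is constant along fibers of $\sX \to \Spec R$, giving $\chi(X, \sO_X) = \chi(\sX_s, \sO_{\sX_s})$. Since $\sX$ is regular, every Weil divisor is Cartier; writing $\sX_s = \sum_i m_i E_i$, set $m = \gcd_i m_i$ and $D = \sum_i (m_i/m)\, E_i$, so that $\sX_s = mD$ as Cartier divisors on~$\sX$.

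The chain of ideals $\sO_\sX \supset \sO_\sX(-D) \supset \cdots \supset \sO_\sX(-mD)$ induces a filtration of $\sO_{mD}$ whose successive quotients are the line bundles $\sO_\sX(-(j-1)D)|_D \cong \sL^{-(j-1)}$ on~$D$, where $\sL := \sO_\sX(D)|_D$. Telescoping Euler characteristics yields
\[
\chi(\sX_s, \sO_{\sX_s}) = \sum_{j=0}^{m-1} \chi(D, \sL^{-j}).
\]
Since $mD = \sX_s$ is principal (cut out by a uniformizer of~$R$), $\sL^m \cong \sO_D$, so $\sL$ is $m$-torsion in $\Pic(D)$. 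By Snapper's theorem, $n \mapsto \chi(D, \sL^n)$ is a polynomial of degree at most $\dim D$; being periodic of period~$m$, it must be constant, equal to $\chi(D, \sO_D)$. Hence $\chi(X, \sO_X) = m\,\chi(D, \sO_D)$, and in particular $m \mid \chi(X, \sO_X)$.

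It remains to identify~$m$ with $\ind(X)$. For any closed point $x \in X$, its closure $Z \subset \sX$ is an integral one-dimensional closed subscheme, finite flat over~$R$; since $R$ is Henselian with algebraically closed residue field, $Z$ is local, supported at a unique point of~$\sX_s$, and intersection theory gives $[K(x):K] = (Z \cdot \sX_s) = \sum_i m_i\,(Z \cdot E_i)$, a $\Z$-linear combination of the~$m_i$, so $m \mid \ind(X)$. For the reverse divisibility $\ind(X) \mid m$, one produces, for each~$i$, a closed point of~$X$ of degree dividing~$m_i$: take a smooth closed point $p$ of~$E_i$ off the other components, choose a regular system of parameters $(u, t_1, \dots, t_d)$ at~$p$ with $u$ defining~$E_i$, so that $\pi = u^{m_i}\cdot(\mathrm{unit})$ in~$\sO_{\sX,p}$; base change by the totally ramified degree-$m_i$ extension $R' = R[u]/(u^{m_i}-\pi)$, and apply Hensel's lemma or Artin approximation to the resulting $R'$-scheme, whose local ring above~$p$ splits into regular factors when $\Char(k) \nmid m_i$. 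The main obstacle is the case $\Char(k) \mid m_i$: here the base change introduces singularities that must be handled via N\'eron smoothening, or one invokes the classical Lichtenbaum--Lorenzini result $\ind(X) = \gcd_i m_i$ for regular proper models in arbitrary characteristic.
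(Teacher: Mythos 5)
Your core computation---writing $\sX_s = mD$ with $m=\gcd_i m_i$, filtering $\sO_{\sX_s}$ by the ideals $\sO_{\sX}(-jD)$, telescoping to $\chi(X,\sO_X)=\sum_{j=0}^{m-1}\chi(D,\sL^{\otimes-j})$, and then using Snapper's theorem plus $\sL^{\otimes m}\cong\sO_D$ to conclude the polynomial $n\mapsto\chi(D,\sL^{\otimes n})$ is constant---is exactly the paper's argument (its Proposition~\ref{prop:DivCartier}, combined with $\chi(X,\sO_X)=\chi(Y,\sO_Y)$).

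Where you diverge, you do more than is necessary and, in the process, open a gap. The theorem only requires the inequality $\ind(X)\mid m$; you set out to prove the full equality $m=\ind(X)$. Your intersection-theoretic argument for $m\mid\ind(X)$ (closure $Z$ of a closed point $x\in X$ is finite flat local over $R$, so $[K(x):K]=(Z\cdot\sX_s)=\sum_i m_i\,(Z\cdot E_i)$) is correct, and it is precisely Lemma~\ref{lem:indexmult}~(ii) of the paper---but the paper explicitly remarks that this direction is included only for completeness and is not used.

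For the direction that \emph{is} needed, $\ind(X)\mid m$, your base-change-by-$R'=R[u]/(u^{m_i}-\pi)$ strategy runs into the real difficulty you flag: the fiber product $\sX\times_R R'$ is generally not smooth over $R'$ at the chosen point (its special fiber at $p'$ is still the nonreduced $\sO_{Y,p}$), so Hensel's lemma does not apply directly; and the naive alternative of restricting to a curve through $p$ cut out by the remaining parameters produces a DVR essentially of finite type over $R$ whose fraction field need not be finite over $K$. This is not merely a difficulty ``when $\Char(k)\mid m_i$''---the smoothness failure is there in all characteristics. The paper avoids the issue entirely by citing \cite[\textsection9.1, Corollary~9]{BLR}, which is exactly the statement $\ind(X)\mid\mult(Y)$ for a flat $R$\nobreakdash-scheme of finite type over a Henselian DVR with algebraically closed residue field; that result requires neither regularity of the model nor any hypothesis on $\Char(k)$. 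Your fallback citation to a ``Lichtenbaum--Lorenzini result'' is not the right attribution: BLR supplies the direction $\ind(X)\mid\mult(Y)$, while the converse $\mult(Y)\mid\ind(X)$ (which you also prove) is the one attributed to Gabber--Liu--Lorenzini (\cite[\textsection8.1]{GabberLiuLorenzini}). So the proposal is correct in outline and matches the paper's method, but you should replace the incomplete direct argument and the misattributed citation by the BLR reference, and you can drop the unneeded converse direction.
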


We first recall two elementary facts relating the index of the generic fiber
and the multiplicity of the special fiber of a scheme over a discrete valuation ring.

\begin{Def}
The \emph{multiplicity} of a noetherian scheme~$S$, denoted $\mult(S)$, is the greatest common divisor,
over all points $\eta \in S$ of codimension~$0$, of the length of the artinian local ring $\sO_{S,\eta}$.
\end{Def}

\begin{lem}
\label{lem:indexmult}
Let~$R$ be a discrete valuation ring with quotient field~$K$ and residue field~$k$.
Let $\sX$ be a flat $R$\nobreakdash-scheme of finite type, with generic fiber $X=\sX \otimes_R K$ and special fiber $Y=\sX \otimes_R k$.
\begin{enumerate}
\item[(i)] If~$R$ is Henselian and~$k$ is algebraically closed, then $\ind(X)$ divides $\mult(Y)$.
\item[(ii)] If~$\sX$ is locally factorial (e.g., regular) and
is proper over~$R$, then $\mult(Y)$ divides $\ind(X)$.
\end{enumerate}
\end{lem}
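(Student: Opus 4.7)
The plan is to prove (ii) by intersection theory with the special fiber viewed as a Cartier divisor, and to prove (i) by constructing, for each generic point of~$Y$, a finite flat $R$-subscheme of~$\sX$ of the appropriate degree and passing to its generic fiber.

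For (ii), local factoriality of~$\sX$ makes each component $Y_i$ a Cartier divisor, so $Y=\sum_i e_i Y_i$ holds as an equality of Cartier divisors, where $e_i=\mathrm{length}(\sO_{Y,\eta_i})$ and $\mult(Y)=\gcd_i e_i$. Given a closed point $x\in X$, let $Z\subset\sX$ be its reduced scheme-theoretic closure. Then~$Z$ is an integral one-dimensional subscheme dominating $\Spec R$; by properness of~$\sX$ and flatness of~$Z$ over~$R$, the morphism $Z\to\Spec R$ is finite flat of degree $[K(x):K]$. Flatness yields $[K(x):K]=\mathrm{length}_k(Z\otimes_R k)=\deg(Z\cdot Y)=\sum_i e_i\deg(Z\cdot Y_i)$, and each $\deg(Z\cdot Y_i)\ge 0$ since $Z\not\subset Y_i$. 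Therefore $\mult(Y)\mid[K(x):K]$, and taking the gcd over~$x$ gives $\mult(Y)\mid\ind(X)$.

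For (i), it suffices to show $\ind(X)\mid e$ for each generic point~$\eta$ of~$Y$ with $e=\mathrm{length}(\sO_{Y,\eta})$. Set $Y_1=\overline{\{\eta\}}$ and $d=\dim X$, and shrink~$\sX$ to an affine open disjoint from other components of~$Y$. The goal is to build a closed subscheme $C\subset\sX$ finite flat over~$R$ of degree exactly~$e$: its generic fiber $C_K\subset X$ is then a zero-cycle of degree~$e$, forcing $\ind(X)\mid e$. To build~$C$, I would choose a closed $k$-point $y\in(Y_1)_{\mathrm{red}}$ lying in the intersection of the smooth locus of $(Y_1)_{\mathrm{red}}$ and the Cohen--Macaulay locus of~$Y$; both loci are open, and the CM locus is dense since it contains~$\eta$ (as $\sO_{Y,\eta}$ is Artinian), so the intersection is nonempty and admits a $k$-point because~$k$ is algebraically closed. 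After lifting a regular system of parameters $\bar f_1,\dots,\bar f_d$ of the regular local ring $\sO_{(Y_1)_{\mathrm{red}},y}$ to elements $f_1,\dots,f_d\in\sO_{\sX,y}$, one considers $T=V(f_1,\dots,f_d)\subset\sX$. By the additivity of Hilbert--Samuel multiplicities applied to $\sO_{Y,y}$ (whose unique minimal prime of maximal dimension is its nilradical, with quotient $\sO_{(Y_1)_{\mathrm{red}},y}$ and generic localization $\sO_{Y,\eta}$ of length~$e$), one obtains $e(\bar f;\sO_{Y,y})=e\cdot e(\bar f;\sO_{(Y_1)_{\mathrm{red}},y})=e$. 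Cohen--Macaulayness of $\sO_{Y,y}$ makes $\bar f_1,\dots,\bar f_d$ a regular sequence, so this multiplicity equals $\mathrm{length}_k(\sO_{Y,y}/(\bar f))=e$. A Krull-dimension count combined with the permutability of regular sequences in local Noetherian rings (applied to $\pi,f_1,\dots,f_d\in\sO_{\sX,y}$) shows $\sO_{T,y}$ is one-dimensional and $R$-flat with closed fiber of length~$e$; since~$R$ is Henselian and~$y$ is isolated in~$T_k$, a Hensel/ZMT argument produces an open and closed subscheme $C\subset T$ containing~$y$ with $C\to\Spec R$ finite (hence finite flat of degree~$e$).

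The main obstacle is obtaining the exact equality $\mathrm{length}_k(\sO_{T,y}/\pi)=e$ rather than just an inequality, which by itself would not be enough for divisibility. This hinges both on the additivity formula for Hilbert--Samuel multiplicities and on choosing~$y$ in the Cohen--Macaulay locus of~$Y$, forcing the genericity of~$y$ into the argument.
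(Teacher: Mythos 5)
The paper does not actually write out a proof of either part: assertion~(i) is attributed to \cite[\textsection9.1, Corollary~9]{BLR} and assertion~(ii) to \cite[\textsection8.1]{GabberLiuLorenzini} (and the authors note they do not even use~(ii)). So there is no in-paper proof to compare against line by line, only these citations. Your argument for~(ii) is the expected intersection-theoretic computation --- the reduced closure~$Z$ of a closed point of~$X$ is finite flat over~$R$, and its degree equals $\deg(Z\cdot Y)=\sum_i e_i\deg(Z\cdot Y_i)$ --- and it is correct and matches the spirit of the cited reference. Your argument for~(i) is a genuine, self-contained construction where the paper merely cites: you locate a closed $k$-point~$y$ that is simultaneously a smooth point of $(Y_1)_{\mathrm{red}}$ and in the Cohen--Macaulay locus of~$Y$, lift a regular system of parameters, cut out $T=V(f_1,\dots,f_d)$, and then show $\sO_{T,y}$ is $R$-flat with special fiber of length exactly~$e$; Henselianness then extracts a finite flat $R$-subscheme of degree~$e$, whose generic fiber is the desired $0$-cycle. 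You have correctly identified the crux --- one needs exact equality of lengths rather than an inequality --- and resolved it via the additivity formula for Hilbert--Samuel multiplicities together with the CM hypothesis on $\sO_{Y,y}$, which indeed forces $e(\bar f;\sO_{Y,y})=\mathrm{length}(\sO_{Y,y}/(\bar f))$. The flatness of~$T$ over~$R$ follows, as you say, from the fact that $\pi,f_1,\dots,f_d$ is a regular sequence on~$\sO_{\sX,y}$ (flatness of~$\sX$ plus the CM property of~$\sO_{Y,y}$) and permutability of regular sequences in Noetherian local rings. The only small imprecision is the declaration $d=\dim X$: the integer actually used is $\dim\sO_{(Y_1)_{\mathrm{red}},y}=\dim Y_1$, which need not equal $\dim X$ unless~$\sX$ is equidimensional; but since one works near a single component this is easily arranged and does not affect the argument.
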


\begin{proof}
The first assertion follows from~\cite[\textsection9.1, Corollary~9]{BLR}.
The second assertion was included for the sake of completeness; we shall not use it.
It can be proved with a simple computation in intersection theory,
see \cite[\textsection8.1]{GabberLiuLorenzini} for details.
\end{proof}

\begin{proof}[Proof of Theorem~\ref{th:dividesregular}]
Let $\sX$ be a regular proper model of~$X$ over~$R$.
It is clear, from the definition of $\mult(Y)$,
that~$Y$ is divisible by $\mult(Y)$ as a Weil divisor on~$\sX$.
As~$\sX$ is regular, it follows that~$Y$ is divisible by $\mult(Y)$ as a Cartier divisor on~$\sX$.
Thus, Theorem~\ref{th:dividesregular} results from the combination of
Lemma~\ref{lem:indexmult}~(i)
and Proposition~\ref{prop:DivCartier} below.
\end{proof}

\begin{prop}
\label{prop:DivCartier}
Let~$R$ be a discrete valuation ring with quotient field~$K$.
Let~$\sX$ be a normal proper flat $R$\nobreakdash-scheme, with generic fiber~$X$ and special fiber~$Y$.
Let $m \geq 1$ be an integer such that~$Y$ is divisible by~$m$ as a Cartier divisor on~$\sX$.
Then~$m$ divides $\chi(X,\sO_X)$.
\end{prop}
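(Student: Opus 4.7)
The plan is to compute $\chi(X,\sO_X)$ by moving to the special fiber, where divisibility by~$m$ becomes visible through the filtration induced by the effective Cartier divisor $D$ with $mD=Y$.

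\textbf{Step 1 (reduction to the special fiber).} Since $\sX$ is proper and flat over~$R$ and $\sO_\sX$ is flat over~$R$, the complex $R\pi_*\sO_\sX$ (where $\pi:\sX\to\Spec(R)$) is a perfect complex of $R$\nobreakdash-modules, hence quasi\nobreakdash-isomorphic to a bounded complex of finitely generated free $R$\nobreakdash-modules. Comparing ranks on the generic and special fibers yields
\begin{equation*}
\chi(X,\sO_X)=\chi(Y,\sO_Y)\rlap{\text{.}}
\end{equation*}
So it suffices to show $m\mid\chi(Y,\sO_Y)$.

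\textbf{Step 2 (filtration).} Let~$D$ be an effective Cartier divisor on~$\sX$ with $mD=Y$. A uniformizer $\pi\in R$ trivializes $\sO_\sX(-Y)=\pi\sO_\sX\cong\sO_\sX$, and the decreasing sequence of ideal sheaves
\begin{equation*}
\sO_\sX\supset\sO_\sX(-D)\supset\sO_\sX(-2D)\supset\cdots\supset\sO_\sX(-mD)=\pi\sO_\sX
\end{equation*}
has successive quotients $\sO_\sX(-iD)/\sO_\sX(-(i+1)D)=\sO_\sX(-iD)|_D$ for $i=0,\dots,m-1$. Setting $N:=\sO_\sX(-D)|_D$, we therefore obtain
\begin{equation*}
\chi(Y,\sO_Y)=\sum_{i=0}^{m-1}\chi(D,N^{\otimes i})\rlap{\text{.}}
\end{equation*}
Note that~$D$ is proper over~$k$: its support lies in that of~$Y$, and if~$f$ is a local equation for~$D$ then $\pi$ is a unit multiple of~$f^m$, so $\pi$ acts as~$0$ on~$\sO_D$.

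\textbf{Step 3 (periodicity).} The line bundle $\sO_\sX(mD)=\sO_\sX(Y)$ is canonically trivial on~$\sX$ (it is generated by $1/\pi$), and in particular $N^{\otimes(-m)}\cong\sO_D$. By Snapper's theorem, the function $n\mapsto\chi(D,N^{\otimes n})$ is a polynomial in~$n$ of degree at most~$\dim(D)$. This polynomial takes the same value at~$n$ and~$n-m$ for every integer~$n$, because $N^{\otimes(n-m)}\cong N^{\otimes n}$; being periodic, it is constant. Hence
\begin{equation*}
\chi(D,N^{\otimes i})=\chi(D,\sO_D)\qquad\text{for all }i\in\Z\rlap{\text{.}}
\end{equation*}

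\textbf{Step 4 (conclusion).} Combining the three previous steps,
\begin{equation*}
\chi(X,\sO_X)=\chi(Y,\sO_Y)=\sum_{i=0}^{m-1}\chi(D,N^{\otimes i})=m\cdot\chi(D,\sO_D)\rlap{\text{,}}
\end{equation*}
which is divisible by~$m$. The only real content is the periodicity argument in Step~3, and the only nontrivial input is Snapper's polynomial theorem; no regularity of~$D$ is needed, which is why this works in arbitrary characteristic (matching the generality in which the proposition is stated).
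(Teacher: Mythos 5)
Your proof is correct and follows essentially the same route as the paper's: you build the filtration of $\sO_Y$ by the ideal sheaves $\sO_\sX(-iD)$, identify the graded pieces as $\sO_\sX(-iD)|_D$, apply Snapper's theorem to conclude that $n\mapsto\chi(D,N^{\otimes n})$ is a polynomial, use the canonical triviality of $\sO_\sX(-mD)$ to deduce periodicity and hence constancy, and finish with $\chi(X,\sO_X)=\chi(Y,\sO_Y)$. The only cosmetic differences are that the paper phrases the filtration via the closed subschemes $Y_j\hookrightarrow Y$ and cites Mumford for the invariance of $\chi$ in flat proper families, whereas you invoke the perfect-complex description directly; the explicit remark that $D$ is proper over $k$ is a nice touch that the paper leaves implicit.
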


\begin{proof}
By assumption, there is a Cartier divisor~$D$ on~$\sX$ satisfying the
equality of Cartier divisors $Y=mD$.  As~$Y$ is effective and~$\sX$ is
normal, $D$ is effective, so that $\sO_\sX(-D)$ is a sheaf of ideals
of~$\sO_\sX$.
For $j \in \{0,\dots,m\}$, the closed subscheme~$Y_j$ of~$\sX$ defined by the ideal sheaf $\sO_\sX(-jD)$
is contained in~$Y_m=Y$ and may thus be regarded as a closed subscheme of~$Y$.
The corresponding ideal sheaf $\sI_j \subseteq \sO_Y$
fits into an exact sequence
of $\sO_\sX$\nobreakdash-modules
\begin{align*}
\xymatrix{
0 \ar[r] & \sO_\sX(-mD) \ar[r] & \sO_\sX(-jD) \ar[r] & i_{Y*}\sI_j \ar[r] & 0\rlap{\text{,}}
}
\end{align*}
where~$i_Y$ denotes the inclusion of~$Y$ in~$\sX$.

Let $i: D \hookrightarrow Y$ and $i_D:D \hookrightarrow \sX$ denote the canonical closed immersions,
where in an abuse of notation~$D$ stands for the scheme~$Y_1$.
For every $j \in \{0,\dots,m-1\}$, there is an exact sequence of $\sO_Y$\nobreakdash-modules
\begin{align*}
\xymatrix{
0 \ar[r] & \sI_{j+1} \ar[r] & \sI_j \ar[r] & i_*(\sL^{\otimes j}) \ar[r] & 0\rlap{\text{,}}
}
\end{align*}
with $\sL=i_D^*\big(\sO_{\sX}(-D)\big)$.
As $\sI_0=\sO_Y$ and $\sI_m=0$, we deduce that
\begin{align}
\label{eq:chiY}
\chi(Y,\sO_Y)=\sum_{j=0}^{m-1} \chi(D,\sL^{\otimes j})\rlap{\text{.}}
\end{align}
According to Kleiman's version of Snapper's theorem
\cite[Chapter~I, \textsection1]{Kleiman}, there exists a polynomial $P \in \Q[X]$
such that $P(n)=\chi(D,\sL^{\otimes n})$ for all $n \in \Z$.
The $\sO_\sX$\nobreakdash-module $\sO_{\sX}(-mD)$ is free
since~$mD$ is a principal divisor on~$\sX$. Hence $\sL^{\otimes m}$ is a free invertible sheaf on~$D$.
In particular, the polynomial~$P$ takes the value $\chi(D,\sO_D)$ on all integer multiples of~$m$.
It is therefore a constant polynomial, equal to $\chi(D,\sO_D)$;
we conclude, thanks to~\eqref{eq:chiY}, that $\chi(Y,\sO_Y)=m\chi(D,\sO_D)$.
As $\chi(X,\sO_X)=\chi(Y,\sO_Y)$ (see \cite[Chapter~II, \textsection5, Corollary]{Mumford}), the proposition is proved.
\end{proof}

\begin{cor}\label{cor:Char0}
Let~$R$ be a Henselian discrete valuation ring with algebraically closed residue field of characteristic~$0$.  Let~$X$ be a smooth proper variety over the quotient field~$K$ of~$R$.
Then $\ind(X)$ divides $\chi(X,\sO_X)$.
\end{cor}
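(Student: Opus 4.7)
The plan is to reduce Corollary~\ref{cor:Char0} directly to Theorem~\ref{th:dividesregular}. Since $X$ is smooth over~$K$, it is regular; since it is proper over~$K$, the only hypothesis of Theorem~\ref{th:dividesregular} left to verify is the existence of a regular proper model of~$X$ over~$R$. As $X$ is smooth, it is a finite disjoint union of smooth integral components, each of which has its own index dividing $\ind(X)$ and its own Euler characteristic summing to $\chi(X,\sO_X)$, so after treating components separately I may assume $X$ is integral.

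First I would construct \emph{some} proper flat $R$-model of~$X$. Starting from a finite affine cover of~$X$ and clearing denominators in the finitely many generators and relations of each affine $K$-algebra, one obtains a finite-type $R$-scheme $\sX'$ with generic fiber~$X$. Applying Nagata compactification (valid over the noetherian base $\Spec R$) embeds $\sX'$ as an open subscheme of a proper $R$-scheme; taking the schematic closure of~$X$ inside this compactification yields an integral proper flat $R$-scheme $\sX_0$ with generic fiber~$X$.

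Next, I would invoke resolution of singularities. The residue field~$k$ has characteristic zero, so~$R$ is an excellent discrete valuation ring, and hence $\sX_0$ is an excellent integral scheme. By Hironaka-type resolution of singularities for integral schemes of finite type over~$R$ (the very hypothesis flagged in the introduction as the reason for the characteristic-zero assumption), there exists a proper birational morphism $\sX \to \sX_0$ with $\sX$ regular, and this morphism can be taken to be an isomorphism above the regular locus of $\sX_0$. The generic fiber of $\sX_0$ is the smooth $K$-scheme~$X$, hence lies in that regular locus; so $\sX$ is a regular proper $R$-scheme with generic fiber still equal to~$X$, i.e., a regular proper model of~$X$ over~$R$.

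With the model in hand, Theorem~\ref{th:dividesregular} applies and gives $\ind(X)\mid\chi(X,\sO_X)$, completing the proof. The only nontrivial ingredient is the availability of resolution of singularities over the excellent base~$R$, and this is where the characteristic-zero hypothesis on~$k$ is essential; the remainder of the argument is the standard construction of a proper flat model by Nagata compactification followed by passage to the schematic closure of the generic fiber.
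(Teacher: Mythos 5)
Your proof follows the same strategy as the paper's: construct a proper flat model over~$R$ (spread out, then Nagata compactification and schematic closure), resolve its singularities using Temkin's theorem — which is available precisely because the residue field has characteristic zero — noting that the resolution is an isomorphism over the regular locus and hence leaves the smooth generic fiber untouched, and then apply Theorem~\ref{th:dividesregular}. The paper's own proof is a one-line invocation of exactly these facts, so your version just fills in the intermediate steps. One small slip in your reduction to the integral case: you assert that each integral component $X_i$ of $X$ has index \emph{dividing} $\ind(X)$. The divisibility goes the other way: since $X_i$ is a subscheme of $X$, we have $\ind(X)\mid\ind(X_i)$ (see the remark after the definition of the index in Section~\ref{sec:all_fields}), and it is this inclusion, combined with $\ind(X_i)\mid\chi(X_i,\sO_{X_i})$ from the integral case, that yields $\ind(X)\mid\chi(X_i,\sO_{X_i})$ for each $i$ and hence $\ind(X)\mid\chi(X,\sO_X)$. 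As written, your direction of divisibility would not let you pass from the components to $X$. Alternatively, you could avoid splitting the Euler characteristic altogether by resolving each integral component of a proper flat model of $X$ and taking the disjoint union to get a regular proper model of the whole of $X$, after which Theorem~\ref{th:dividesregular} applies directly.
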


\begin{proof}
Resolution of singularities for integral $R$\nobreakdash-schemes of finite type holds, by~\cite[Theorem~1.1]{Temkin},
so that Theorem~\ref{th:dividesregular} may be applied.
\end{proof}

\begin{ex}
\label{ex:hyperelliptic}
A smooth proper curve of genus~$2$ over $\C((t))$ has index~$1$.  More generally, a hyperelliptic curve of even genus over $\C((t))$ has index~$1$.
These two assertions were already known (see Remark~\ref{rem:atiyah} below, or \cite[Theorem~2]{GPP}).
By Theorem~\ref{th:dividesregular},
they also hold for curves defined over the maximal unramified extension of a $p$\nobreakdash-adic field, as regular proper models exist in this case (see \cite{Abhyankar}).
\end{ex}

\begin{ex}
\label{ex:generaltypesurface}
If~$X$ is a smooth proper surface of general type over $\C((t))$, then $\ind(X)$ divides the plurigenera $P_n=\dim H^0(X,\sO_X(nK_X))$ for all $n \geq 2$.

Indeed, if the canonical class~$K_X$ is nef, then
\begin{align*}
P_n=\frac{n(n-1)}{2} (K_X)^2 + \chi(X,\sO_X)
\end{align*}
for $n \geq 2$ (see \cite[\textsection4]{Kodaira}, \cite[Ch.~10, Proposition~10]{FriedmanAlgSurfaces}).
In general, one can always find a smooth proper surface~$X'$ over $\C((t))$, birationally
equivalent to~$X$, with nef canonical class (see \cite[Ch.~III, Theorem~2.2]{KollarBook}).  This implies the claim as both
the plurigenera and the index are birational invariants among smooth proper varieties (see 
\cite[\textsection7.1]{Debarre}, \cite[Proposition~6.8]{GabberLiuLorenzini}).

Let us note that $\ind(X)$ does \emph{not} necessarily divide the first plurigenus $P_1=p_g$.
Indeed, smooth quintic surfaces in~$\P^3$ satisfy $p_g=4$, and
their index over $\C((t))$ may be equal to~$5$, as will result from Proposition~\ref{prop:twistedfermat} below.
\end{ex}

\begin{rem}
\label{rem:atiyah}
Let~$C$ be a smooth proper curve over $\C((t))$.
By Corollary~\ref{cor:Char0}, the index of~$C$ divides $\deg(K_C)/2$,
where~$K_C$ denotes a canonical divisor.
A~stronger statement is known to hold:
even the class of~$K_C$ in $\Pic(C)$ is divisible by~$2$
(see Atiyah~\cite[p.~61]{Atiyah}, which rests on results of Serre and Mumford).
One might wonder whether the same phenomenon occurs in higher dimension.
Namely, in the situation of Corollary~\ref{cor:Char0}, does the
$0$\nobreakdash-dimensional component of the Todd class $\td(T_X) \in \CH^*(X) \otimes_\Z \Q$ belong to the image of $\CH_0(X) \to \CH_0(X) \otimes_\Z \Q$?
\end{rem}

\smallskip\section{Index of proper schemes over the quotient field of an excellent Henselian discrete valuation ring}

The following theorem, which builds upon Theorem~\ref{th:dividesregular},
extends Corollary~\ref{cor:Char0} in three directions: the residue field of~$R$ may have positive characteristic,
the scheme~$X$ may be singular,
and any coherent sheaf may be used instead of~$\sO_X$.

Recall that a discrete valuation ring~$R$ is \emph{excellent} if
the field extension $\hat K/K$ is separable, where~$K$ denotes the quotient
field of~$R$ and~$\hat K$ is its completion.  This condition is trivially satisfied
if~$K$ has characteristic~$0$ or~$R$ is complete.

\begin{thm}
\label{th:B}
Let~$R$ be an excellent Henselian discrete valuation ring with algebraically closed residue field~$k$ and quotient field~$K$.  Let~$X$ be a proper $K$\nobreakdash-scheme.
Let~$E$ be a coherent sheaf on~$X$.
\begin{enumerate}
\item[(i)] If~$k$ has characteristic~$0$, then $\ind(X)$ divides $\chi(X,E)$.
\item[(ii)] If~$k$ has characteristic $p>0$, the prime-to\nobreakdash-$p$ part of $\ind(X)$ divides $\chi(X,E)$.
\end{enumerate}
\end{thm}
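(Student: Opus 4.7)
My plan is to reduce via a $K$\nobreakdash-theoretic dévissage to the case $E = \sO_Y$ for $Y$ an integral proper $K$\nobreakdash-scheme, then induct on $\dim Y$ by comparing~$Y$ with a regular proper $R$\nobreakdash-scheme constructed from a model of~$Y$: via Temkin's resolution in case~(i), and via Gabber's refinement of de~Jong's alteration theorem (one prime $\ell\neq p$ at a time) in case~(ii), with Theorem~\ref{th:dividesregular} providing the base divisibility on the comparison scheme.

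\emph{Dévissage and base case.} On~$X$, the group $K_0(\mathrm{Coh}(X))$ is generated by the classes $[j_{Y_i*}\sO_{Y_i}]$ for integral closed subschemes $Y_i\subseteq X$, so $\chi(X,E) = \sum_i n_i\chi(Y_i,\sO_{Y_i})$ for suitable $n_i\in\Z$. Since every closed point of $Y_i$ is a closed point of~$X$ with the same residue field, $\ind(X)\mid\ind(Y_i)$. Hence it suffices to prove that for every integral proper $K$\nobreakdash-scheme~$Y$, $\ind(Y)$ in case~(i) or its prime-to\nobreakdash-$p$ part in case~(ii) divides $\chi(Y,\sO_Y)$. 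I induct on $n = \dim Y$; the base $n=0$ is trivial since $Y = \Spec L$ forces $\chi(Y,\sO_Y) = [L:K] = \ind(Y)$.

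\emph{Inductive step.} Choose an integral proper flat $R$\nobreakdash-scheme~$\sY_0$ with generic fiber~$Y$, e.g.\ the scheme-theoretic closure of~$Y$ in some $\P^N_R$. In case~(i), Temkin's resolution~\cite[Theorem~1.1]{Temkin}, applicable since~$R$ is excellent and $\Char(k)=0$, produces a projective birational modification $\sY\to\sY_0$ with $\sY$ regular; set $Y' = \sY\otimes_R K$ and let $\pi:Y'\to Y$ be the induced proper birational morphism. In case~(ii), fix a prime $\ell\neq p$ and invoke Gabber's refinement of de~Jong's alteration theorem to obtain $\sY\to\sY_0$ proper with $\sY$ regular and generic degree~$d$ prime to~$\ell$; set $Y' = \sY\otimes_R K$ with $\pi:Y'\to Y$ generically finite of degree~$d$. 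In either case $Y'$ is a regular proper $K$\nobreakdash-scheme admitting~$\sY$ as a regular proper $R$\nobreakdash-model, so Theorem~\ref{th:dividesregular} gives $\ind(Y')\mid\chi(Y',\sO_{Y'})$, and by functoriality of the index $\ind(Y)\mid\ind(Y')$ yields $\ind(Y)\mid\chi(Y',\sO_{Y'})$. Since $\pi$ is generically finite of degree~$d$ (with $d=1$ in case~(i)), $\pi_*\sO_{Y'}$ has generic rank~$d$ on~$Y$ while $R^i\pi_*\sO_{Y'}$ vanishes at the generic point for $i\geq 1$; a standard argument in $K_0(\mathrm{Coh}(Y))$ then writes $[R\pi_*\sO_{Y'}] = d[\sO_Y] + \alpha$ with~$\alpha$ a combination of classes of coherent sheaves supported on a proper closed subscheme $Z\subsetneq Y$ of dimension $<n$. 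Leray yields
\begin{equation*}
\chi(Y',\sO_{Y'}) = d\,\chi(Y,\sO_Y) + \chi(Y,\alpha).
\end{equation*}
By the inductive hypothesis applied to the integral components of~$Z$ together with the dévissage step, $\chi(Y,\alpha)$ is divisible by $\ind(Y)$ in case~(i) and by $\ell^{v_\ell(\ind(Y))}$ in case~(ii). Case~(i) is immediate. In case~(ii), $\ell^{v_\ell(\ind(Y))}\mid d\,\chi(Y,\sO_Y)$ combined with $\gcd(\ell,d)=1$ gives $\ell^{v_\ell(\ind(Y))}\mid\chi(Y,\sO_Y)$; varying~$\ell$ over all primes different from~$p$ produces the prime-to\nobreakdash-$p$ divisibility.

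\emph{Main obstacle.} The principal technical issues arise in case~(ii): ensuring that Gabber's alteration produces a scheme \emph{proper over~$R$} (which may require a separate compactification step) and verifying the comparison $[R\pi_*\sO_{Y'}] \equiv d[\sO_Y]$ in $K_0(\mathrm{Coh}(Y))$ modulo codimension-$\geq 1$ classes, which is what makes the inductive hypothesis applicable. The need to run the argument one prime $\ell\neq p$ at a time, with the coprimality $\gcd(\ell,d)=1$ providing the final cancellation, is exactly what restricts the conclusion in positive residue characteristic to the prime-to\nobreakdash-$p$ part of~$\ind(X)$.
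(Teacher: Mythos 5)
Your proposal is correct and takes essentially the same approach as the paper: a d\'evissage on $K_0(\mathrm{Coh})$ to reduce to integral $Y$ with $E=\sO_Y$, induction on dimension via the localization sequence comparing $Y$ with the generic fiber $Y'$ of a regular proper $R$\nobreakdash-scheme obtained by alteration, and Theorem~\ref{th:dividesregular} applied to $Y'$. The only minor difference is that the paper handles case~(i) by the same Gabber--de~Jong argument as case~(ii), simply letting $\ell$ range over all primes when $p=0$, rather than invoking Temkin's resolution as you do; both routes work.
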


\begin{proof}
The Grothendieck group of coherent sheaves on~$X$ is generated by the
classes of~$\sO_Z$ for all integral closed subschemes~$Z$ of~$X$ (see
\cite[\textsection8, Lemme~17]{BorelSerre} and
\cite[Proposition~1.1]{BerthelotSGA6}).  Moreover, the index of~$X$ divides the index of any subscheme of~$X$.
Therefore it suffices to
show that $\ind(Z)$, or the prime-to\nobreakdash-$p$ part of $\ind(Z)$, divides
$\chi(Z,\sO_Z)$, for all integral closed subschemes~$Z$ of~$X$.

This remark proves the theorem in case $\dim(X)=0$.
Indeed, for any integral closed subscheme~$Z$ of dimension~$0$, we have
$\ind(Z)=\deg(Z)=\chi(Z,\sO_Z)$.
To establish the theorem in general, we argue by induction on $d=\dim(X)$.
Assume that the conclusion of the theorem holds for all proper $K$\nobreakdash-schemes of dimension~$<d$.
In order to prove that it also holds for~$X$, we may assume, thanks to
the preceding paragraph, that~$X$ is integral and that $E=\sO_X$.

Let~$\ell$ be a prime number invertible in~$k$.  We shall prove that the largest power of~$\ell$ which divides $\ind(X)$ also divides $\chi(X,\sO_X)$.
By Nagata's compactification theorem~\cite{Nagata}, there exists a proper model~$\sX$ of~$X$ over~$R$.
According to a theorem of Gabber and de Jong \cite[Theorem~1.4]{Illusie},
as~$R$ is excellent,
there exist a regular scheme~$\sY$, and a proper, surjective, generically finite morphism $h:\sY \to \sX$ of degree~$N$ prime to~$\ell$.
Let $Y = \sY \otimes_R K$ and let $f:Y \to X$ denote the morphism induced by~$h$.

Let $U \subseteq X$ be a dense open subset above which~$f$ is finite and flat.
Recall the exact sequence of Grothendieck groups of coherent sheaves
\begin{align}
\label{es:localization}
\xymatrix{
G_0(C) \ar[r] & G_0(X) \ar[r] & G_0(U) \ar[r] & 0\rlap{\text{,}}
}
\end{align}
where $C=X \setminus U$
(see \cite[\textsection8, Proposition~7]{BorelSerre}).
The restriction to~$U$ of the coherent sheaf $R^q f_* \sO_Y$ is~$0$ for $q>0$, and is a locally free $\sO_U$\nobreakdash-module of rank~$N$ for $q=0$,
which we may assume to be a free $\sO_U$\nobreakdash-module of rank~$N$ after further shrinking~$U$.
It then follows, thanks to~\eqref{es:localization}, that the class in~$G_0(X)$ of the bounded complex of coherent sheaves $Rf_*\sO_Y$ is the sum of the class of $\sO_X^N$
and a class coming from~$G_0(C)$.  Therefore there exists a virtual
coherent sheaf~$F$ on~$C$ such that
\begin{align}
\label{eq:chidevissage}
\chi(Y,\sO_Y)=N\chi(X,\sO_X)+\chi(C,F)\rlap{\text{.}}
\end{align}
As~$C$ is a subscheme of~$X$, the index of~$X$ divides the index of~$C$.  Therefore, thanks to the induction hypothesis, if $p=0$ then $\ind(X)$ divides $\chi(C,F)$, and if $p>0$, then the prime-to\nobreakdash-$p$ part
of~$\ind(X)$ divides $\chi(C,F)$.
Moreover, the index of~$X$ divides the index of~$Y$ because there exists a morphism from~$Y$ to~$X$.
The index of~$Y$ in turn divides $\chi(Y,\sO_Y)$ according to Theorem~\ref{th:dividesregular}.
All in all, we deduce
from~\eqref{eq:chidevissage} that the prime-to\nobreakdash-$Np$ part of $\ind(X)$ divides $\chi(X,\sO_X)$.
As~$\ell$ is prime to~$N$, this concludes the proof.
\end{proof}

Combining Theorem~\ref{th:B} and Proposition~\ref{prop:RR} yields:

\begin{thm}
\label{thm:Divisibility}
Let~$R$ be a Henselian discrete valuation ring with algebraically closed residue field~$k$ and quotient field~$K$.
Let~$X$ be a smooth proper $K$\nobreakdash-scheme.  Let~$E$ be a coherent sheaf on~$X$.
Let~$p$ denote the characteristic of~$k$.
\begin{enumerate}
\item[(i)] If $p=0$, then $\ind(X)$ divides $\chi(X,E)$.
\item[(ii)] If $p>0$, then the prime-to\nobreakdash-$p$ part of $\ind(X)$ divides $\chi(X,E)$.
\item[(iii)] If $p>\dim(X)+1$, then $\ind(X)$ divides $\chi(X,E)$.
\end{enumerate}
\end{thm}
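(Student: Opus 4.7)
The plan is to deduce Theorem~\ref{thm:Divisibility} essentially immediately from Theorem~\ref{th:B} and Proposition~\ref{prop:RR}, with only an elementary prime-by-prime arithmetic combination as new input. For part~(i), observe that a Henselian discrete valuation ring with residue field of characteristic zero is automatically excellent, so Theorem~\ref{th:B}(i) applies directly. For part~(ii), I would reduce to the excellent case by passing to the completion $\hat R$ of $R$: the Euler characteristic is unchanged under the flat base change from $K$ to $\hat K$, and smoothness of $X$ combined with Greenberg--Artin approximation over a Henselian DVR gives $\ind(X)=\ind(X\otimes_K \hat K)$, so Theorem~\ref{th:B}(ii) over $\hat R$ implies the conclusion over $R$.

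For part~(iii), set $d=\dim(X)$ and assume $p>d+1$, so that $p$ is coprime to $(d+1)!$. The plan is to verify, for every prime $\ell$, that $v_\ell(\ind(X))\le v_\ell(\chi(X,E))$, where $v_\ell$ denotes the $\ell$-adic valuation. If $\ell\neq p$, this is part~(ii): the prime-to-$p$ part of $\ind(X)$ divides $\chi(X,E)$, so in particular its $\ell$-component does. If $\ell=p$, then $\ell=p>d+1$ is coprime to $(d+1)!$, so the $\ell$-part of $\ind(X)$ divides the prime-to-$(d+1)!$ part of $\ind(X)$, which in turn divides $\chi(X,E)$ by Proposition~\ref{prop:RR}. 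Running $\ell$ over all primes yields $\ind(X)\mid \chi(X,E)$.

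The only conceptual work is pairing the two results correctly: the hypothesis $p>d+1$ is precisely the threshold that makes the $(d+1)!$ denominator from Hirzebruch--Riemann--Roch (Proposition~\ref{prop:RR}) coprime to the exceptional prime $p$ left uncontrolled by the Gabber--de Jong alteration argument (Theorem~\ref{th:B}). The main obstacle, such as it is, is the reduction to the excellent case in part~(ii); this is technical rather than substantive. Once that is in place, the combination argument for~(iii) is immediate, and no new geometric input is required.
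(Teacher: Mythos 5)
Your proposal is correct and follows essentially the same route as the paper, which reduces to the complete (hence excellent) case via Lemma~\ref{lem:indexcompletion} and then combines Theorem~\ref{th:B} with Proposition~\ref{prop:RR}; your prime-by-prime combination for (iii) spells out exactly what the paper leaves implicit, and your observation that the completion step is superfluous for (i) (residue characteristic zero forces $\Char K=0$, so $R$ is already excellent) is a valid small simplification. One caution: deducing $\ind(X)=\ind(X\otimes_K\hat K)$ from ``Greenberg--Artin approximation over a Henselian DVR'' is not quite enough on its own, since approximation handles $\hat K$-rational points directly but not closed points of higher degree; to treat those, the paper's Lemma~\ref{lem:indexcompletion} also invokes a separability reduction from Gabber--Liu--Lorenzini, Krasner's lemma, and Weil restriction of scalars before applying the approximation result. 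This is exactly the technical obstacle you flagged, so the plan is sound, but the step is somewhat more than an immediate consequence of approximation.
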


\begin{proof}
In view of Lemma~\ref{lem:indexcompletion} below, we may assume that~$R$ is
complete and hence excellent.  In this case, we may apply
Theorem~\ref{th:B} and Proposition~\ref{prop:RR}.
\end{proof}

\begin{lem}
\label{lem:indexcompletion}
Let~$R$ be a Henselian discrete valuation ring, with completion $\hat R$.
Let~$K$ and $\hat K$ denote the quotient fields of~$R$ and $\hat R$ respectively.
If~$X$ is a smooth $K$\nobreakdash-scheme, the index of~$X$ over~$K$ and the index of $X \otimes_K \hat K$ over $\hat K$ are equal.
\end{lem}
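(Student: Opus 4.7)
My plan is to establish the two divisibilities $\ind(X_{\hat K}) \mid \ind(X)$ and $\ind(X) \mid \ind(X_{\hat K})$ separately.

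The direction $\ind(X_{\hat K}) \mid \ind(X)$ does not use smoothness: for any closed point $x \in X$ with residue field $L := K(x)$, the fibre $X_{\hat K} \times_X \Spec L = \Spec(L \otimes_K \hat K)$ is an Artinian $\hat K$-scheme of $\hat K$-length $[L:K]$, hence a zero-cycle on $X_{\hat K}$ of that degree. Taking the greatest common divisor over the closed points of~$X$ yields the claim.

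For the reverse direction, I would show that every closed point $y$ of $X_{\hat K}$ of degree $n$ over $\hat K$ satisfies $\ind(X) \mid n$; taking the gcd over all such $y$ then gives $\ind(X) \mid \ind(X_{\hat K})$. Writing $\hat N$ for the residue field of $y$, and $\hat N_s \subseteq \hat N$ for the maximal separable subextension of $\hat N/\hat K$ (of degree $m\mid n$), two classical features of Henselian discrete valuation rings come into play. First, since $K$ is Henselian, Krasner's lemma implies that $L \mapsto L \otimes_K \hat K$ is a degree-preserving bijection between the finite separable extensions of $K$ and those of $\hat K$; in particular $\hat N_s = N_s \otimes_K \hat K$ for a finite separable extension $N_s/K$ of degree $m$, and $N_s$ is itself a Henselian discrete valuation ring field with completion $\hat N_s$. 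Second, for any smooth scheme $Y$ over the fraction field $F$ of a Henselian discrete valuation ring with completion $\hat F$, one has $Y(F) \neq \emptyset$ as soon as $Y(\hat F) \neq \emptyset$: a Zariski-open neighbourhood of any given $\hat F$-point admits an \'etale map to some $\A^d_F$, and Hensel's lemma in $F$, applied after approximating the $\hat F^d$-coordinates by $F^d$-coordinates, produces the desired $F$-point.

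The argument would then proceed by noting that $y$ determines an $\hat N$-rational point of the smooth $N_s$-scheme $X \otimes_K N_s$ (via the $N_s$-algebra structure on $\hat N$ coming from $N_s \hookrightarrow \hat N_s \subseteq \hat N$); descending this point across the purely inseparable extension $\hat N/\hat N_s$ to an $\hat N_s$-rational point of $X \otimes_K N_s$, the implicit function theorem for the Henselian pair $(N_s,\hat N_s)$ then delivers an $N_s$-rational point of $X$, hence a closed point of $X$ of degree dividing $[N_s:K]=m$, which divides $n$.

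The principal technical obstacle is precisely the descent step across the purely inseparable extension $\hat N/\hat N_s$, which is relevant only in positive residue characteristic and is genuinely delicate for positive-dimensional smooth schemes (since in general $Y(F')\neq \emptyset$ does not imply $Y(F)\neq \emptyset$ for purely inseparable $F'/F$); the other two ingredients (the Galois-theoretic consequence of Krasner's lemma and the Henselian implicit function theorem) are classical.
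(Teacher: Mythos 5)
Your overall structure matches the paper's: the nontrivial direction is $\ind(X)\mid\ind(X_{\hat K})$, and you correctly isolate the two working ingredients — Krasner's lemma (giving a degree-preserving correspondence between finite separable extensions of $K$ and of $\hat K$) and the Henselian implicit-function/approximation theorem (giving $Y(F)\neq\emptyset$ from $Y(\hat F)\neq\emptyset$ for $Y$ smooth over $F$). The one cosmetic difference is that the paper, instead of base-changing $X$ to $N_s$ and viewing it over the Henselian field $N_s$, passes to a quasi-projective neighbourhood of the point and then applies Weil restriction $R_{L/K}(X\otimes_K L)$, which is smooth over $K$, and invokes the Henselian approximation theorem over $K$ itself. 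These two packagings of the separable case are essentially equivalent.

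However, the step you flag as "the principal technical obstacle" — descending an $\hat N$-point across the purely inseparable extension $\hat N/\hat N_s$ — is indeed a genuine gap in your proposal, and you are right that it cannot be argued away by smoothness alone. The paper does not attempt such a descent post hoc; instead it reduces to the separable case \emph{before} invoking Krasner, by citing Gabber, Liu and Lorenzini \cite[Theorem~9.2]{GabberLiuLorenzini}. That theorem asserts that for a smooth variety over a field, the index equals the \emph{separable index}, i.e.\ the gcd of the degrees of closed points with separable residue field. Applied to $X_{\hat K}$ over $\hat K$, it lets one assume from the outset that the residue field $\hat K(P)$ is separable over $\hat K$, so the purely inseparable layer never appears. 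Without a substitute for this input, your argument does not close, so the proposal as written is incomplete precisely where you suspected.
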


\begin{proof}
Let~$P$ be a closed point of $X \otimes_K \hat K$.  Let~$d$ be its degree.  We must show that $\ind(X)$ divides~$d$.
For this we may assume that the residue field $\hat K(P)$ of~$P$ is separable over $\hat K$, by \cite[Theorem~9.2]{GabberLiuLorenzini}.
Under this assumption, there exists a finite extension $L/K$ such that $L \otimes_K \hat K =\hat K(P)$, by Krasner's lemma.
After replacing~$X$ with a neighborhood of~$P$, we may assume in addition
that~$X$ is quasi-projective over~$K$.
The Weil restriction of scalars $R_{L/K}(X \otimes_K L)$ is then a smooth $K$\nobreakdash-scheme
(see \cite[\textsection7.6, Theorem~4 and Proposition~5]{BLR}).
Applying \cite[\textsection3.6, Corollary~10]{BLR} to it now shows that $X(L) \neq \emptyset$.
\end{proof}

\begin{ex}
\label{ex:generaltypesurfacep}
According to Theorem~\ref{thm:Divisibility},
Example~\ref{ex:generaltypesurface} is also valid over the maximal unramified extension of a $p$\nobreakdash-adic field with $p \geq 5$.
\end{ex}

As immediate consequence of Theorem~\ref{thm:Divisibility}, we have:

\begin{cor}
\label{cor:chi1}
We keep the assumptions of Theorem~\ref{thm:Divisibility}.  Assume $|\chi(X,\sO_X)|=1$.
Then $\ind(X)$ is a power of~$p$. If moreover $p=0$ or $\dim(X)<p-1$,
then $\ind(X)=1$.
\end{cor}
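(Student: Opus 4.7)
The plan is simply to invoke Theorem~\ref{thm:Divisibility} with $E=\sO_X$ and read off the three conclusions. Under the hypothesis $|\chi(X,\sO_X)|=1$, the divisibility assertions in Theorem~\ref{thm:Divisibility} become statements of the form ``something divides $\pm 1$'', so they specialize to the desired identities essentially for free.

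First I would set $E=\sO_X$ and record that $\chi(X,E)=\chi(X,\sO_X)\in\{-1,+1\}$. Then I would split into cases according to the characteristic~$p$ of~$k$. In the equal characteristic zero case $p=0$, Theorem~\ref{thm:Divisibility}~(i) asserts that $\ind(X)\mid\chi(X,\sO_X)$, and since $|\chi(X,\sO_X)|=1$ the positive integer $\ind(X)$ must equal~$1$. In the mixed/equal characteristic case $p>0$, Theorem~\ref{thm:Divisibility}~(ii) says the prime-to-$p$ part of $\ind(X)$ divides $\chi(X,\sO_X)=\pm1$, which forces the prime-to-$p$ part of $\ind(X)$ to be~$1$, i.e., $\ind(X)$ is a power of~$p$, as claimed.

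Finally, the additional clause ``if moreover $p=0$ or $\dim(X)<p-1$'' is handled by Theorem~\ref{thm:Divisibility}~(iii): the condition $\dim(X)<p-1$ is precisely $p>\dim(X)+1$, so (iii) applies and gives $\ind(X)\mid\chi(X,\sO_X)=\pm1$, hence $\ind(X)=1$. The $p=0$ conclusion has already been obtained, so nothing further is required.

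There is no significant obstacle: the corollary is a verbatim specialization of Theorem~\ref{thm:Divisibility} to $E=\sO_X$, and the only ``content'' is the trivial observation that a positive integer dividing $\pm 1$ equals~$1$. The sole bookkeeping point to be careful about is the translation between the hypothesis $\dim(X)<p-1$ stated in the corollary and the hypothesis $p>\dim(X)+1$ used in part~(iii) of the theorem, which is immediate.
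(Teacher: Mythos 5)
Your proof is correct and matches the paper's, which simply states the corollary as an ``immediate consequence'' of Theorem~\ref{thm:Divisibility} without a separate argument; your specialization to $E=\sO_X$ and the division into cases via parts (i)--(iii) is exactly the intended reading.
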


Corollary~\ref{cor:chi1} applies to geometrically irreducible smooth varieties over~$K$ which satisfy $H^i(X,\sO_X)=0$ for $i>0$,
for instance Fano varieties or more generally rationally connected varieties when~$K$ has characteristic~$0$
(see~\cite[Ch.~IV]{KollarBook}; we say that a variety~$X$ over~$K$ is \emph{rationally connected} if $X \otimes_K {\bar K}$ is rationally
connected in the sense of \cite{KollarBook}, where $\bar K$ denotes an algebraic closure of~$K$).
We thus obtain:

\begin{cor}
\label{cor:RC}
Let~$X$ be a smooth proper rationally connected variety over the maximal
unramified extension of a $p$\nobreakdash-adic field.
The index of~$X$ is a power of~$p$.  If~moreover $\dim(X)<p-1$, the index of~$X$ is equal to~$1$.
\end{cor}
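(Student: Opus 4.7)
The plan is to apply Corollary~\ref{cor:chi1} directly with $E = \sO_X$, so the task reduces to verifying the three hypotheses of that corollary: that $K$ is the quotient field of a Henselian discrete valuation ring with algebraically closed residue field of characteristic~$p$, that $X$ is smooth and proper, and that $|\chi(X,\sO_X)| = 1$.

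The first two conditions are immediate. The maximal unramified extension of a $p$\nobreakdash-adic field is, by definition, the fraction field of the strict Henselization of $\Z_p$ (or more generally, of the ring of integers of the chosen $p$\nobreakdash-adic field), which is a Henselian discrete valuation ring whose residue field is $\overline{\F}_p$, an algebraically closed field of characteristic~$p$. Smoothness and properness of $X$ are given.

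The main content of the proof lies in the verification that $\chi(X,\sO_X) = 1$. Since~$K$ has characteristic~$0$, I would invoke the standard fact that for any smooth proper rationally connected variety~$X$ over a field~$K$ of characteristic zero, one has the vanishing $H^i(X \otimes_K \bar K, \sO) = 0$ for all $i > 0$; this is a theorem of Koll\'ar--Miyaoka--Mori and appears, e.g., in~\cite[Ch.~IV]{KollarBook}, which is already cited in the paper's discussion preceding the corollary. By faithfully flat descent along $\bar K / K$, or equivalently by flat base change for cohomology, this vanishing descends to $H^i(X,\sO_X) = 0$ for $i>0$. Moreover,~$X$ is geometrically irreducible (this is built into the definition of rational connectedness used here), so $H^0(X,\sO_X) = K$ and thus $\dim_K H^0(X,\sO_X) = 1$. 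Combining these, $\chi(X,\sO_X) = 1$.

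With this in hand, Corollary~\ref{cor:chi1} asserts that $\ind(X)$ is a power of~$p$, and moreover that $\ind(X) = 1$ once $\dim(X) < p - 1$. The only nontrivial ingredient is the cohomological vanishing for rationally connected varieties, so no real obstacle arises; the corollary is essentially a transcription of Corollary~\ref{cor:chi1} to the setting of rationally connected varieties via the classical cohomological characterization.
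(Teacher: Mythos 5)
Your proposal is correct and takes essentially the same route as the paper: the authors also deduce Corollary~\ref{cor:RC} directly from Corollary~\ref{cor:chi1} by observing that a smooth proper rationally connected variety over a characteristic‑zero field is geometrically integral and satisfies $H^i(X,\sO_X)=0$ for $i>0$ (citing~\cite[Ch.~IV]{KollarBook}), hence $\chi(X,\sO_X)=1$, and that the maximal unramified extension of a $p$\nobreakdash-adic field is the quotient field of a Henselian discrete valuation ring with algebraically closed residue field of characteristic~$p$. The details you supply (flat base change for the cohomology vanishing, identifying the ring of integers of $K^{\mathrm{nr}}$ as the strict Henselization of $\sO_K$) are exactly the points the paper leaves implicit.
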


Corollary~\ref{cor:RC} gives some evidence for the conjecture
according to which any rationally connected variety over the maximal
unramified extension of a $p$\nobreakdash-adic field possesses a rational
point.

Theorem~\ref{thm:Divisibility} also has interesting consequences when applied to other coherent sheaves than~$\sO_X$.
As an illustration, we extend Examples~\ref{ex:generaltypesurface} and~\ref{ex:generaltypesurfacep} to
higher-dimensional general type varieties. (The argument of Example~\ref{ex:generaltypesurface} fails in higher
dimension because already a smooth proper threefold need not be birationally equivalent to any smooth proper threefold with nef canonical class.)

\begin{cor}
\label{cor:generaltype}
Let~$X$ be a smooth proper variety of general type over $\C((t))$
or over the maximal unramified extension of a $p$\nobreakdash-adic field with $p>\dim(X)+1$.
Then $\ind(X)$ divides the plurigenera $P_n=\dim H^0(X,\sO_X(nK_X))$ for all $n \geq 2$.
\end{cor}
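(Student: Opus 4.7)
The strategy is to express $P_{n}$ as an alternating sum of Euler characteristics of coherent sheaves on the smooth variety~$X$ itself, so that Theorem~\ref{thm:Divisibility} can be applied term by term.

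First, by finite generation of the canonical ring (Birkar--Cascini--Hacon--McKernan), $X$ admits a canonical model $X_{c}$, a normal projective $K$\nobreakdash-variety with canonical singularities and ample canonical divisor~$K_{X_{c}}$; canonical singularities are klt, and klt singularities in characteristic zero are rational. After replacing~$X$ by a smooth projective birational model, which affects neither~$\ind(X)$ nor~$P_{n}$ because both are birational invariants of smooth proper varieties, I may assume that the canonical map is a morphism $f\colon X\to X_{c}$.

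Applying the Leray spectral sequence for $f$ to $\omega_{X}^{n}$ yields
\[\chi(X,\omega_{X}^{n}) \;=\; \sum_{i\geq 0}(-1)^{i}\,\chi(X_{c}, R^{i}f_{*}\omega_{X}^{n}).\]
The $i=0$ term equals $\chi(X_{c},\omega_{X_{c}}^{[n]})$ because $f_{*}\omega_{X}^{n}=\omega_{X_{c}}^{[n]}$ for canonical singularities; since $(X_{c},0)$ is klt with $K_{X_{c}}$ ample, Kawamata--Viehweg vanishing gives $H^{i}(X_{c},\omega_{X_{c}}^{[n]})=0$ for $i>0$ and $n\geq 2$, so this term equals $P_{n}(X_{c})=P_{n}(X)$. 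For $i>0$, rational singularities of~$X_{c}$ give $Rf_{*}\sO_{X}=\sO_{X_{c}}$, so the projection formula yields $\chi(X_{c},G)=\chi(X, f^{*}G)$ for every coherent sheaf~$G$ on~$X_{c}$. Applied with $G=R^{i}f_{*}\omega_{X}^{n}$, this rewrites the Leray formula as
\[P_{n} \;=\; \chi(X,\omega_{X}^{n}) \;-\; \sum_{i\geq 1}(-1)^{i}\,\chi(X, f^{*}R^{i}f_{*}\omega_{X}^{n}),\]
in which every term on the right-hand side is the Euler characteristic of a coherent sheaf on the smooth proper $K$\nobreakdash-scheme~$X$. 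Theorem~\ref{thm:Divisibility}---case~(i) for $K=\C((t))$ and case~(iii) for the unramified $p$\nobreakdash-adic case, where the hypothesis $p>\dim(X)+1$ is used---then implies that $\ind(X)$ divides each of these Euler characteristics, and hence divides~$P_{n}$.

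The main technical obstacle is that applying Theorem~\ref{th:B} directly to the singular canonical model~$X_{c}$ would yield only $\ind(X_{c})\mid P_{n}$, which is a priori weaker than $\ind(X)\mid P_{n}$ since~$\ind(X_{c})$ is only known to divide~$\ind(X)$. The key tool overcoming this is the projection formula for a resolution of a variety with rational singularities, which lets one transport Euler characteristic computations from~$X_{c}$ back up to~$X$, where the sharper divisibility statement is available.
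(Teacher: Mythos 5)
Your route is genuinely different from the paper's: instead of passing to the canonical model, the paper invokes the Koll\'ar--Lazarsfeld formula $P_n=\chi\bigl(X,\sO_X(nK_X)\otimes \sI(\|(n-1)K_X\mkern-1mu\|)\bigr)$ for $n\geq 2$, where $\sI(\|(n-1)K_X\mkern-1mu\|)$ is the asymptotic multiplier ideal. This expresses $P_n$ as the Euler characteristic of a single coherent sheaf already living on the smooth variety $X$, so Theorem~\ref{thm:Divisibility} applies in one step with no descent to the singular model.

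Your proposal, however, has a real gap in the projection-formula step. The claim that $\chi(X_c,G)=\chi(X,f^*G)$ for every coherent sheaf $G$ on $X_c$ is incorrect as written: the projection formula reads $Rf_*(Lf^*G)\simeq G\otimes^{L}Rf_*\sO_X\simeq G$, involving the \emph{derived} pullback. To pass from this to an equality of Euler characteristics $\chi(X_c,G)=\chi(X,Lf^*G)$ one needs $Lf^*G$ to be a \emph{bounded} complex, i.e.\ one needs the morphism $f\colon X\to X_c$ to have finite Tor-dimension. This is not automatic for a resolution of a variety with rational (or even canonical) singularities: if $y\in X_c$ is a singular point, $\sO_{X,p}$ (for $p$ in the fiber over $y$) can have infinite flat dimension over the non-regular ring $\sO_{X_c,y}$. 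For instance, take $X_c=\Spec k[x,y,z]/(xy-z^2)$ (an $A_1$ surface singularity, which is canonical and rational) and $f$ the minimal resolution; one chart of $X$ has coordinate ring $B=k[x,s]$ with $A\to B$ given by $x\mapsto x$, $y\mapsto xs^2$, $z\mapsto xs$. The minimal free resolution of $k=A/\mathfrak m$ over $A$ is eventually $2$-periodic, given by the matrix factorization $\phi=\left(\begin{smallmatrix}x&z\\ z&y\end{smallmatrix}\right)$, $\psi=\left(\begin{smallmatrix}y&-z\\ -z&x\end{smallmatrix}\right)$, and after tensoring with $B$ the resulting $2$-periodic complex has nonzero homology at every spot (the element $(-s,1)\in\ker\phi_B$ is not in $\operatorname{im}\psi_B$ since $x$ is not a unit in $B$). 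Hence $\operatorname{Tor}^A_i(B,k)\neq 0$ for all large $i$, so $f$ has infinite Tor-dimension, and $Lf^*G$ is unbounded for $G=\kappa(y)$. Without this step you are left only with $\ind(X_c)\mid\chi(X_c,R^if_*\omega_X^n)$ via Theorem~\ref{th:B}, and since $\ind(X_c)$ may be a proper divisor of $\ind(X)$ this does not close the argument. The asymptotic-multiplier-ideal route avoids the singular model entirely and is how the paper sidesteps precisely this difficulty.
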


\begin{proof}
According to Koll\'ar and Lazarsfeld,
for $n \geq 2$, the $n$th plurigenus of a smooth proper variety~$X$ of general type over a field of characteristic~$0$
may be expressed as the Euler--Poincar\'e characteristic of the coherent sheaf
$$\sO_X(nK_X) \otimes \sI(\|(n-1)K_X\mkern-1mu\|)\rlap{\text{,}}$$
where $\sI(\|(n-1)K_X\mkern-1mu\|)$ denotes the asymptotic multiplier ideal sheaf
associated to the complete linear system $|(n-1)K_X\mkern-1mu|$
(see \cite[\textsection11.2.C]{Lazarsfeld2}, \cite[Example~2.5]{KollarPairs}).
\end{proof}

\smallskip\section{Application: hypersurfaces in projective space}
\label{sec:Hyp}

We illustrate the results of the previous sections by examining the case of hypersurfaces in projective space.
It turns out that a simple application of Theorem~\ref{th:B} yields the best possible bound on the index of a degree~$d$ hypersurface
in $\P^N$ over the quotient field of an
excellent Henselian discrete valuation ring with algebraically closed
residue field, for any~$d$ and~$N$ (Theorem~\ref{thm:hypersurf} and
Proposition~\ref{prop:twistedfermat}).  For certain values of~$d$ and~$N$, this bound is equal to~$1$, thus yielding unexpected
existence results for zero-cycles of degree~$1$ (Examples~\ref{ex:hyp}~(i) and~\ref{ex:hypcont}).

Given two integers~$d$, $N$, let
\begin{align*}
I_{d,N}= \gcd \;\!\biggl\{ \frac{d}{\delta} \; ; \; \delta \in \{1,\dots,N\} \text{ and } \delta \text{ divides } d\biggr\}\rlap{\text{.}}
\end{align*}

\begin{thm}
\label{thm:hypersurf}
Let~$R$ be an excellent Henselian discrete valuation ring with algebraically closed residue field~$k$ and quotient field~$K$.
Let~$p$ denote the characteristic of~$k$.
For any hypersurface $X \subset \P^N_K$ of degree~$d$, one has:
\begin{enumerate}
\item[(i)] If $p=0$, then $\ind(X)$ divides $I_{d,N}$.
\item[(ii)] If $p>0$, the prime-to\nobreakdash-$p$ part of $\ind(X)$ divides $I_{d,N}$.
\item[(iii)] If $p>N$, then $\ind(X)$ divides $I_{d,N}$.
\end{enumerate}
\end{thm}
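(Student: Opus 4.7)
My plan is to apply Theorem~\ref{th:B} directly to $X$, using as test sheaves pushforwards from generic linear sections. For each divisor~$\delta$ of~$d$ with $1\leq \delta\leq N$, I would fix a $K$\nobreakdash-rational linear subspace $L=L_\delta\subset \P^N_K$ of dimension~$\delta$; the intersection $Y_\delta:=X\cap L$ is then a hypersurface of degree~$d$ in $L\cong \P^\delta_K$, of dimension $\delta-1$. For each integer~$m$, the pushforward of $\sO_{Y_\delta}(m)$ along the closed immersion $i_\delta\colon Y_\delta\hookrightarrow X$ is a coherent sheaf on~$X$ whose Euler--Poincar\'e characteristic, via the standard resolution $0\to \sO_{\P^\delta}(-d)\to \sO_{\P^\delta}\to \sO_{Y_\delta}\to 0$, equals
\[
\chi\bigl(X,i_{\delta\,*}\sO_{Y_\delta}(m)\bigr)=\chi\bigl(Y_\delta,\sO_{Y_\delta}(m)\bigr)=\binom{m+\delta}{\delta}-\binom{m+\delta-d}{\delta}.
\]
I would then invoke Theorem~\ref{th:B}(i) in case~(i), respectively Theorem~\ref{th:B}(ii) in case~(ii), to conclude that $\ind(X)$, respectively its prime-to-$p$ part, divides each of these integers, and therefore their greatest common divisor as $\delta$ and~$m$ vary.

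The heart of the argument is then a purely combinatorial divisibility: for each $\delta\leq N$ with $\delta\mid d$,
\[
\gcd_{m\in\Z}\biggl(\binom{m+\delta}{\delta}-\binom{m+\delta-d}{\delta}\biggr)\ \Bigm|\ \frac{d}{\delta}.
\]
To prove this I would write $P_{Y_\delta}(m)=\sum_{k=0}^{\delta-1}a_k\binom{m}{k}$ in the binomial basis, where $a_k=\binom{\delta}{k}-\binom{\delta-d}{\delta-k}$ and in particular $a_{\delta-1}=d$, so that this gcd already divides~$d$; the point is to improve it by the factor~$\delta$. I would argue prime by prime: for $q\nmid\delta$, the equality $v_q(a_{\delta-1})=v_q(d)=v_q(d/\delta)$ is already enough, while for $q\mid\delta$ I would use the expression $a_k=\binom{\delta}{k}-(-1)^{\delta-k}\binom{d-k-1}{\delta-k}$ (valid for $d\geq\delta$) together with the hypothesis $\delta\mid d$ to locate an index~$k$ with $v_q(a_k)\leq v_q(d/\delta)$. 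Taking the gcd over all admissible~$\delta$ then yields $\ind(X)\mid I_{d,N}$ in cases (i) and (ii).

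For case~(iii), where $p>N=\dim(X)+1$, the argument goes through once one upgrades the Theorem~\ref{th:B}(ii) input to the full divisibility $\ind(X)\mid \chi(X,E)$ for every coherent~$E$ on~$X$. When $X$ is smooth this is Theorem~\ref{thm:Divisibility}(iii); in general, one would combine Theorem~\ref{th:B}(ii) with Haution's non-smooth extension of Proposition~\ref{prop:RR} recalled in the remark following that proposition, which yields that the prime-to-$N!$ part of $\ind(X)$ also divides $\chi(X,E)$. Since $\gcd(p,N!)=1$, the least common multiple of the prime-to-$p$ and prime-to-$N!$ parts of $\ind(X)$ equals $\ind(X)$, so the two divisibilities together imply the desired one.

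The main obstacle is the combinatorial claim of the second paragraph. It is transparent in small cases (for $\delta=1$ one has $P_{Y_1}(m)=d$ identically, and for $\delta=2$ with $d$ even one computes $P_{Y_2}\bigl((d-2)/2\bigr)=d/2$), but a uniform prime-by-prime analysis of the $a_k$'s when $q\mid \delta$ is the one step where honest combinatorial bookkeeping cannot be avoided.
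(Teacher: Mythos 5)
Your high-level strategy — feeding Euler--Poincar\'e characteristics of sheaves supported on linear sections of $X$ into Theorem~\ref{th:B} and reducing to a combinatorial divisibility — is the same as the paper's, but the concrete reduction you choose is different, and the combinatorial step is not actually carried out, which is a genuine gap. You fix the dimension $\delta$ to be a divisor of $d$ and let the twist $m$ vary, so you need
$\gcd_{m\in\Z}\bigl(\binom{m+\delta}{\delta}-\binom{m+\delta-d}{\delta}\bigr) \mid d/\delta$
for each such $\delta$; as you yourself acknowledge, the prime-by-prime analysis of the coefficients $a_k$ for $q\mid\delta$ is left as ``honest combinatorial bookkeeping,'' and you only verify small cases. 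The paper avoids this bookkeeping entirely by varying the dimension $n$ over all of $\{1,\dots,N\}$ (not just divisors of $d$) while keeping $m=0$: from $\chi(X,\sO_{X_n})=\chi_{d,n}=1-(-1)^n\binom{d-1}{n}$ and the one-line identity
$1-\chi_{d,n}=(1-\chi_{d,n-1})\bigl(1-\tfrac{d}{n}\bigr)$,
it proves the sharper statement $I_{d,N}=\gcd\{\chi_{d,n};\,1\leq n\leq N\}$ by a two-line induction on $N$. You would need to supply a complete proof of your claim (or switch to the paper's choice of test characteristics) before the argument is finished.

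For part~(iii), your argument is valid as far as it goes, but heavier than necessary and still dependent on the unproved combinatorial claim. You correctly note that Theorem~\ref{thm:Divisibility}(iii) does not apply directly because a hypersurface need not be smooth, and you get around this by combining Theorem~\ref{th:B}(ii) with Haution's non-smooth extension of Proposition~\ref{prop:RR}, taking the lcm of the prime-to-$p$ and prime-to-$N!$ parts of $\ind(X)$. The paper's route is much lighter: a degree-$d$ hypersurface in $\P^N_K$ carries a zero-cycle of degree $d$ (intersect with a $K$-rational line), so $\ind(X)\mid d$, and when $p>N$ one has $v_p(I_{d,N})=v_p(d)$ because all the $\delta$'s appearing in the definition of $I_{d,N}$ are prime to $p$; hence $v_p(\ind(X))\leq v_p(I_{d,N})$, and part~(ii) finishes the proof with no appeal to Haution's theorem.
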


\begin{ex}
\label{ex:hyp}
(i) Let~$K$ denote either the field $\C((t))$, or the maximal unramified extension of a $p$\nobreakdash-adic field with $p\geq 5$.
Then any sextic hypersurface in $\P^3$ over~$K$ has a zero-cycle of degree~$1$,
and so does any hypersurface of degree~$12$ in $\P^4$.

(ii) If $d \leq N$, then $I_{d,N}=1$.  In this case, it is even true that $X(K) \neq \emptyset$, according to a theorem of Lang~\cite{Lang}.
\end{ex}

\begin{proof}[Proof of Theorem~\ref{thm:hypersurf}]
For $n \in \{1,\dots,N\}$, let $X_n$ denote the intersection of~$X$ with a linear subspace of~$\P^N$ of dimension~$n$.
Let $p \geq 0$ be the characteristic of~$k$.
According to Theorem~\ref{th:B},
the index of~$X$, if $p=0$, or its prime-to\nobreakdash-$p$ part if $p>0$, divides $\chi(X,\sO_{X_n})$ for every~$n$.
On the other hand, as~$X_n$ is a degree~$d$ hypersurface in~$\P^n_K$, we have $\chi(X,\sO_{X_n})=\chi_{d,n}$, where
\begin{align*}
\chi_{d,n}=1-(-1)^n \binom{d-1}{n}\rlap{\text{.}}
\end{align*}
The following lemma thus concludes the proof of~(i) and~(ii).

\begin{lem}
For any~$d \geq 1$ and~$N \geq 1$, we have
$I_{d,N} = \gcd \;\!\bigl\{ \chi_{d,n} \;\!; \; 1 \leq n \leq N \bigr\}$.
\end{lem}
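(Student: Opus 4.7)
The plan is to compare both $I_{d,N}$ and $\gcd\{\chi_{d,n}\}$ to the auxiliary quantity $g_{d,N} := \gcd\bigl\{\binom{d}{k} : 1 \leq k \leq N\bigr\}$. The identification $\gcd\{\chi_{d,n}\} = g_{d,N}$ is a formal telescoping argument, while the identification $g_{d,N} = I_{d,N}$ I would prove by comparing $p$-adic valuations prime by prime.

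First, using Pascal's rule,
\[ \chi_{d,n} - \chi_{d,n-1} = -(-1)^n\!\left[\binom{d-1}{n}+\binom{d-1}{n-1}\right] = (-1)^{n+1}\binom{d}{n}\rlap{\text{,}} \]
and $\chi_{d,0} = 1 - \binom{d-1}{0} = 0$. Telescoping realizes each $\chi_{d,n}$ as $\sum_{k=1}^n (-1)^{k+1}\binom{d}{k}$, hence as a $\Z$\nobreakdash-linear combination of the $\binom{d}{k}$, and conversely $\binom{d}{n} = (-1)^{n+1}(\chi_{d,n}-\chi_{d,n-1})$. This gives $\gcd\{\chi_{d,n} : 1 \leq n \leq N\} = g_{d,N}$.

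For $I_{d,N} \mid g_{d,N}$: given $k \in \{1,\dots,N\}$, set $\delta = \gcd(d,k) \leq k \leq N$ and write $d = \delta m$, $k = \delta \ell$ with $\gcd(m,\ell)=1$. The relation $k\binom{d}{k} = d\binom{d-1}{k-1}$ becomes $\ell\binom{d}{k} = m\binom{d-1}{k-1}$; coprimality forces $\ell \mid \binom{d-1}{k-1}$, so $d/\delta = m$ divides $\binom{d}{k}$. Since $\delta \mid d$ and $\delta \leq N$, the integer $d/\delta$ is among those whose gcd defines $I_{d,N}$, whence $I_{d,N} \mid \binom{d}{k}$.

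For the reverse divisibility $g_{d,N} \mid I_{d,N}$, fix a prime~$p$ and set $a = v_p(d)$, $b = \min(a, \lfloor \log_p N\rfloor)$. Any $\delta \mid d$ with $\delta \leq N$ satisfies $v_p(\delta) \leq b$, with equality realized by $\delta = p^b$, so $v_p(I_{d,N}) = a - b$. On the other hand, counting multiples of powers of $p$ in the block of $p^b$ consecutive integers $d, d-1, \dots, d-p^b+1$ yields
\[ \sum_{j=0}^{p^b-1} v_p(d-j) \;=\; \frac{p^b-1}{p-1} + (a-b)\rlap{\text{,}} \]
since for each $i \leq b$ exactly $p^{b-i}$ of these integers are divisible by $p^i$, and for $b < i \leq a$ only $d$ itself is. Combining with Legendre's formula $v_p(p^b!) = (p^b-1)/(p-1)$ gives $v_p\bigl(\binom{d}{p^b}\bigr) = a-b$, and since $p^b \leq N$, we deduce $v_p(g_{d,N}) \leq a-b = v_p(I_{d,N})$. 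Together with the previous direction, this forces equality at every prime, completing the proof. The only step requiring any calculation is this last $p$\nobreakdash-adic count; the rest is formal bookkeeping.
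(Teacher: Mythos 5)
Your proof is correct, and it takes a genuinely different route from the paper. You introduce the auxiliary quantity $g_{d,N} = \gcd\{\binom{d}{k} : 1 \leq k \leq N\}$, identify it with $\gcd\{\chi_{d,n}\}$ via the telescoping relation $\chi_{d,n} - \chi_{d,n-1} = (-1)^{n+1}\binom{d}{n}$, and then prove $g_{d,N} = I_{d,N}$ directly: the inclusion $I_{d,N} \mid g_{d,N}$ comes from the classical divisibility $\frac{d}{\gcd(d,k)} \mid \binom{d}{k}$, and the reverse from a prime-by-prime count showing $v_p\bigl(\binom{d}{p^b}\bigr) = v_p(d) - b$ for $b = \min(v_p(d),\lfloor \log_p N\rfloor)$. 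The paper instead works with the $\chi_{d,n}$ directly, writing $1 - \chi_{d,n}$ as a product $\prod_{i\leq n}\bigl(1 - \frac{d/\gcd(i,d)}{i/\gcd(i,d)}\bigr)$, reducing modulo $I_{d,N}$ to get $I_{d,N} \mid \chi_{d,n}$, and proving the reverse divisibility by induction on $N$ using the one-step recurrence $1-\chi_{d,N} = (1-\chi_{d,N-1})(1 - d/N)$. The paper's argument is shorter and avoids $p$-adic valuations; yours is a bit longer but isolates the classical and independently interesting identity $\gcd\{\binom{d}{k} : k \leq N\} = \gcd\{d/\delta : \delta \mid d,\ \delta \leq N\}$, and the equality $\gcd\{\chi_{d,n}\} = \gcd\{\binom{d}{k}\}$ via telescoping is cleaner than the paper's induction. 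One small unstated point in your valuation count: you should note that no term $d-j$ with $0 < j < p^b$ is divisible by $p^i$ for $i > b$ (indeed $v_p(d-j) = v_p(j) < b$ since $v_p(j) < b \leq v_p(d)$), which justifies that the only contributions for $b < i \leq v_p(d)$ come from $j = 0$ and that there are none for $i > v_p(d)$.
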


\begin{proof}
We may rewrite $\chi_{d,n}$ as
\begin{align*}
\chi_{d,n}&= 1 - \prod_{1 \leq i \leq n} \bigg(1 - \frac{d}{\gcd(i,d)}\cdot\frac{1}{\frac{i}{\gcd(i,d)}}\bigg)\rlap{\text{.}}
\end{align*}
The integers $i/\gcd(i,d)$ and $d/\gcd(i,d)$ are coprime, and $I_{d,N}$ divides $d/\gcd(i,d)$ for $i \leq N$,
therefore $i/\gcd(i,d)$ and $I_{d,N}$ are coprime for $i\leq N$.  Thus, if $n\leq N$, each factor appearing in the above expression makes sense in $\Z/I_{d,N}\Z$.
As $I_{d,N}$ divides $d/\gcd(i,d)$ for $i \leq N$, we conclude that $I_{d,N}$ divides $\chi_{d,n}$ for all $n \leq N$.

It remains to be shown that $\gcd \;\!\!\bigl\{ \chi_{d,n} \;\!; \; 1 \leq n \leq N \bigr\}$ divides $I_{d,N}$.
For $N=1$ this is clear.  Assume $N\geq 2$ and
$\gcd \;\!\!\bigl\{ \chi_{d,n} \;\!; \; 1 \leq n \leq N-1 \bigr\}$ divides $I_{d,N-1}$.
If~$I_{d,N}=I_{d,N-1}$, there is nothing to prove.
Otherwise~$N$ must divide~$d$, and the desired result follows from the equality
\begin{align*}
1 - \chi_{d,N} = \big(1 - \chi_{d,N-1}\big)\bigg(1 - \frac{d}{N}\bigg)
\end{align*}
and from the induction hypothesis.
\end{proof}

Let us turn to~(iii).  Denoting by $v_p(n)$ the $p$\nobreakdash-adic valuation of an integer~$n$,
we remark that if $p>N$, then $v_p(I_{d,N})=v_p(d)$.
As~$X$ is a degree~$d$ hypersurface in projective space,
the index of~$X$ divides~$d$.  Therefore $v_p(\ind(X))\leq v_p(I_{d,N})$ if $p>N$.
In view of~(ii), this completes the proof of Theorem~\ref{thm:hypersurf}.
\end{proof}

Proposition~\ref{prop:twistedfermat} below shows that the bound given in Theorem~\ref{thm:hypersurf} is optimal,
as it is attained by ``maximally twisted'' Fermat hypersurfaces.

\begin{prop}
\label{prop:twistedfermat}
Let~$K$ be the quotient field of a discrete valuation ring~$R$,
and let $X_{d,N} \subset \P^N_K$ be the hypersurface defined by
\begin{align}
\label{eq:fermat}
x_0^d = \sum_{i=1}^N \pi^i x_i^d\rlap{\text{,}}
\end{align}
where~$\pi$ is a uniformiser of~$R$.
Then $\ind(X_{d,N})=I_{d,N}$ for any $d,N\geq 1$.
\end{prop}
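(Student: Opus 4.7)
My plan is to prove the two divisibilities $\ind(X_{d,N})\mid I_{d,N}$ and $I_{d,N}\mid\ind(X_{d,N})$ separately.

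For $\ind(X_{d,N})\mid I_{d,N}$, I would exhibit explicit closed points on~$X_{d,N}$. For each divisor~$\delta$ of~$d$ with $\delta\leq N$, setting $x_i=0$ for $i\notin\{0,\delta\}$ reduces~\eqref{eq:fermat} to $x_0^d=\pi^\delta x_\delta^d$; writing $e=d/\delta$, a solution is given by $x_\delta=1$ and $x_0=t$ for any root~$t$ of $Y^e-\pi$. This polynomial is Eisenstein over~$R$, so $K(t)/K$ is totally ramified of degree~$e=d/\delta$, yielding a closed point of~$X_{d,N}$ of degree dividing~$d/\delta$. Taking the gcd over all such~$\delta$ gives $\ind(X_{d,N})\mid I_{d,N}$.

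For the reverse divisibility I would argue valuation-theoretically. Since~$X_{d,N}$ is smooth over~$K$, the residue field~$L$ at any closed point is a finite separable extension of~$K$, so it suffices to show $I_{d,N}\mid[L:K]$ for every finite separable extension~$L/K$ with $X_{d,N}(L)\neq\emptyset$. Fix such~$L$ and a point $(a_0:\dots:a_N)\in X_{d,N}(L)$, and let~$w$ be any discrete valuation of~$L$ extending~$v$, normalized so that $w(L^*)=\Z$; then $w(\pi)=e_w$ is the ramification index. The equation~\eqref{eq:fermat} forces the minimum of the $(N+1)$ valuations $w(\pi^ia_i^d)=ie_w+dw(a_i)$ (with the convention $\pi^0=1$) to be attained at least twice, yielding indices $0\leq i<j\leq N$ such that $(j-i)e_w=d(w(a_i)-w(a_j))$. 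Setting $m=\gcd(d,e_w)$, this forces $(d/m)\mid(j-i)$, so in particular $d/m\leq N$; thus $m=d/\delta$ for the divisor $\delta=d/m$ of~$d$ lying in $\{1,\dots,N\}$, which means~$m$ is one of the generators in the definition of~$I_{d,N}$. Hence $I_{d,N}\mid m\mid e_w$. Since $L/K$ is separable, $[L:K]=\sum_we_wf_w$, the sum running over discrete valuations~$w$ of~$L$ above~$v$ with residue degrees~$f_w$; as $I_{d,N}$ divides every~$e_w$, it divides $[L:K]$.

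The main obstacle is the combinatorial identification of $m=\gcd(d,e_w)$ as a generator $d/\delta$ of~$I_{d,N}$; once this is in place, the rest follows from standard valuation-theoretic identities, using that smoothness of~$X_{d,N}$ (which forces $\Char(K)\nmid d$) guarantees separability of residue fields at closed points, so that the fundamental identity $[L:K]=\sum e_wf_w$ is available.
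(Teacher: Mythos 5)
Your proof of the divisibility $\ind(X_{d,N})\mid I_{d,N}$ is the same as the paper's (up to a coordinate rescaling of the explicit closed points). For the converse divisibility the core arithmetic is also essentially the one the paper uses: the equation forces two of the quantities $ie_w+dw(a_i)$ to coincide, and this coincidence forces $I_{d,N}$ to divide $e_w$. You extract this directly from the pair $(i,j)$ achieving the minimum; the paper packages the very same observation as its Lemma~\ref{lem:constructb}, phrased contrapositively as an injectivity statement used in an infinite descent on lower bounds for the $v(x_i)$. Your direct reading is slightly shorter.

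There is, however, a genuine gap in your second half. The assertion that the residue field at every closed point of a smooth $K$\nobreakdash-scheme is a finite \emph{separable} extension of~$K$ is false. For example, $\A^1_K$ with $K=\F_p(t)$ is smooth, yet the closed point cut out by $x^p-t$ has residue field $K(t^{1/p})$, purely inseparable of degree~$p$. The extra claim in your final paragraph, that ``smoothness of $X_{d,N}$ (which forces $\Char(K)\nmid d$) guarantees separability of residue fields at closed points,'' is wrong: $\Char(K)\nmid d$ controls the geometry of the hypersurface, not the separability of the residue fields of its closed points. You need separability precisely to invoke $[L:K]=\sum_w e_wf_w$; over a general discrete valuation ring one only has $\sum_w e_wf_w\le[L:K]$, and then $I_{d,N}\mid e_w$ for all~$w$ does not give $I_{d,N}\mid[L:K]$.

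The fix is the reduction the paper makes at the outset: pass to the completion $\hat K$. Since $\ind(X_{d,N}\otimes_K\hat K)$ divides $\ind(X_{d,N})$, it suffices to prove the divisibility over~$\hat K$; and over a complete discrete valuation ring the integral closure in \emph{any} finite extension~$L$ is a complete DVR with a single valuation satisfying $ef=[L:\hat K]$, so $e\mid[L:\hat K]$ unconditionally. (Alternatively, one could invoke the Gabber--Liu--Lorenzini theorem, cited elsewhere in the paper, that for a smooth proper $K$\nobreakdash-scheme the index is achieved by closed points with separable residue field.) With that reduction inserted, your valuation computation goes through and matches the paper's proof.
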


\begin{proof}
For any $\delta \in \{1,\dots,N\}$ which divides~$d$, the $K(\pi^{\delta/d})$\nobreakdash-point with
coordinates $x_0=1$, $x_\delta=\pi^{-\delta/d}$, $x_i=0$ for $i \notin\{0,\delta\}$
lies on $X_{d,N}$.
Therefore $\ind(X_{d,N})$ divides~$I_{d,N}$.

To prove that $I_{d,N}$ divides $\ind(X_{d,N})$,
we may assume, by extending scalars, that~$K$ is complete.
Let us fix a closed point $x \in X$ and show that $I_{d,N}$ divides $e=\deg(x)$.
As~$K$ is complete, the integral closure~$R'$ of~$R$ in the residue field~$K'$ of~$x$
is a discrete valuation ring, and the corresponding valuation $v:K'^*\to \Z$
satisfies $v(\pi)=e$
(see \cite[Ch.~II, \textsection~2, Prop.~3 and Cor.~1]{SerreCL}).

Write the coordinates of~$x$ as $x=[x_0:\dots:x_N]$ where all~$x_i$'s belong to~$R'$ and one of them is a unit.
For $a=(a_0,\dots,a_N) \in \Z^{N+1}$, let us denote by~$m(a)$ the minimum of $ie+da_i$ over all $i \in \{0,\dots,N\}$.

\begin{lem}
\label{lem:constructb}
Assume $I_{d,N}$ does not divide~$e$. Then
for any $a \in \Z^{N+1}$ such that $v(x_i)\geq a_i$ for all~$i$,
there exists $b \in \Z^{N+1}$ such that $v(x_i) \geq b_i$ for all~$i$
and such that $m(b)>m(a)$.
\end{lem}

\begin{proof}
We first remark that the map $i \mapsto ie + da_i$ is injective on $\{0,\dots,N\}$.
Indeed, assume there exist $i,j$ with $0 \leq i<j \leq N$ such that $ie+da_i=je+da_j$.
Let $\delta=\gcd(j-i,d)$.  Then $\delta \in \{1,\dots,N\}$ and $d/\delta$ divides~$e$,
since $(j-i)e=d(a_i-a_j)$.  Thus $I_{d,N}$ divides~$e$, a contradiction.

In particular, there is a unique $i(a) \in \{0,\dots,N\}$ such that $i(a)e+da_{i(a)}=m(a)$.
Let $b_i=a_i$ for $i \neq i(a)$ and $b_{i(a)}=a_{i(a)}+1$.
It is clear that $m(b)>m(a)$.
Let us note, moreover, that $v(\pi^i x_i^d)>m(a)$ for $i \neq i(a)$.   Thanks to~\eqref{eq:fermat},
it follows that $v(\pi^i x_i^d)>m(a)$ for $i=i(a)$ as well.
In other words $i(a)e + dv(x_{i(a)}) > i(a)e + da_{i(a)}$, hence $v(x_{i(a)})>a_{i(a)}$.
As $v(x_i)\geq v(a_i)$ for all~$i$, we conclude that $v(x_i) \geq v(b_i)$ for all~$i$.
\end{proof}

Assume $I_{d,N}$ does not divide~$e$.  A repeated application of Lemma~\ref{lem:constructb},
starting with $0 \in \Z^{N+1}$, yields an $a \in \Z^{N+1}$
such that $v(x_i)\geq a_i$ for all~$i$ and $m(a)>Ne$.  The condition $m(a)>Ne$ implies $a_i>0$ for all~$i$,
which in turn contradicts the hypothesis that one of the~$x_i$'s is a unit.
\end{proof}

\begin{ex}
\label{ex:hypcont}
Let $d \geq 1$.  Write $d = \prod_{i=1}^n p_i^{\alpha_i}$ with pairwise distinct prime numbers~$p_i$.
Let $N_0 = \max(p_1^{\alpha_1},\dots,p_n^{\alpha_n})$.
Theorem~\ref{thm:hypersurf} and Proposition~\ref{prop:twistedfermat} show that the property
 ``any degree~$d$ hypersurface
in~$\P^N$ over $\C((t))$ possesses a zero-cycle of degree~$1$''
holds if and only if $N \geq N_0$.
\end{ex}

\smallskip\section{An extension to cobordism}
\label{sec:Cobordism}

The goal of the present section is to prove that in the situation of Theorem~\ref{thm:Divisibility}, not only does
the index of~$X$ over~$K$ (or its prime-to\nobreakdash-$p$ part) divide the Euler--Poincar\'e characteristic of any vector bundle on~$X$,
but it also divides the class of~$X$ in the cobordism ring of $\Spec(K)$
(Theorem~\ref{thm:Cobordism}).  Since~$K$ may have positive characteristic, we rely on an ad hoc definition
for the cobordism ring of $\Spec(K)$.  The definition we use is motivated by the fact that any
element of the complex cobordism ring is determined by the set of its Chern
numbers (see \cite[p.~117, Theorem]{Stong}).  As a consequence of Theorem~\ref{thm:Cobordism},
 integral-valued rational characteristic classes yield bounds on the index of smooth proper schemes
over the quotient field of a Henselian discrete valuation ring with algebraically closed residue field
(Corollary~\ref{cor:Cobordismcn}, Examples~\ref{ex:cobordismfirstexample}
to~\ref{ex:cobordismlastexample}).

We start by defining the cobordism ring of $\Spec(K)$.  Let $\Z[\mathbf b]=\Z[b_1,b_2,\dots]$ denote a polynomial ring in countably many variables.
Let~$\sI$ be the set of all sequences $(\alpha_j)_{j \geq 1}$ of nonnegative integers
all but finitely many of which are zero.
For $I \in \sI$, let $|I|=\sum j\alpha_j$ and $b^I = \prod b_j^{\alpha_j}$.
Given a smooth proper connected scheme~$X$ of dimension~$d$ over a field~$K$, we define, following Merkurjev~\cite{MerkurjevOrient}, the \emph{fundamental polynomial}
$b_K(X) \in \Z[\mathbf b]$ of~$X$ by the formula
\begin{align}
\label{eq:fundamentalpol}
b_K(X) = \sum_{|I|=d} \deg(c_I(-T_X)) b^I\rlap{\text{,}}
\end{align}
where
$c_I(-T_X) \in \CH_0(X)$ denotes the Conner--Floyd Chern class of the virtual vector bundle $-T_X \in K_0(X)$
associated to~$I$. (The Conner--Floyd Chern classes
are polynomials, with integer coefficients,
in the usual Chern classes; their definition is recalled below.)
For an arbitrary smooth proper $K$\nobreakdash-scheme~$X$, let~$b_K(X)$ be the sum of the fundamental polynomials of the connected components of~$X$.
The \emph{cobordism ring of $\Spec(K)$} is by definition the subring $\L_K \subseteq \Z[\mathbf b]$ generated by the
fundamental polynomials of all irreducible smooth proper schemes~$X$ over~$K$.

Even though we shall not use this property, let us remark that all elements of~$\L_K$ are in fact fundamental polynomials of smooth proper schemes over~$K$.
Indeed, the map $X \mapsto b_K(X)$ takes disjoint unions to sums, products to products
(a consequence of the Whitney sum formula for Conner--Floyd Chern classes, see \cite[Theorem~4.1]{Adams}),
and the set of fundamental polynomials of smooth proper $K$\nobreakdash-schemes
is stable under $b \mapsto -b$
according to Thom (see \cite[\textsection5]{Thom}).

We may now state the main result of this section.

\begin{thm}
\label{thm:Cobordism}
Let~$R$ be a Henselian discrete valuation ring with algebraically closed residue field~$k$ and quotient field~$K$.
Let~$X$ be a smooth proper irreducible $K$\nobreakdash-scheme.  Let~$p$ denote the characteristic of~$k$.
\begin{enumerate}
\item[(i)] If $p=0$, then $b_K(X)$ is divisible by $\ind(X)$ in the ring~$\L_K$.
\item[(ii)] If $p>0$, then $b_K(X)$ is divisible, in~$\L_K$, by the prime-to\nobreakdash-$p$ part of $\ind(X)$.
\item[(iii)] If $p>\dim(X)+1$, then $b_K(X)$ is divisible by $\ind(X)$ in~$\L_K$.
\end{enumerate}
\end{thm}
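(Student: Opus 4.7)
The strategy is to reduce Theorem~\ref{thm:Cobordism} to Theorem~\ref{thm:Divisibility} via the Hattori--Stong theorem, which recasts divisibility in the cobordism ring as divisibility of Euler--Poincar\'e characteristics of virtual vector bundles obtained by applying representations of $\GL_d$ to the tangent bundle. Let $d=\dim(X)$ and let $n$ denote $\ind(X)$ (in cases~(i) and~(iii)) or its prime-to\nobreakdash-$p$ part (in case~(ii)). The claim to be proved is that $P := n^{-1} b_K(X)$, which a priori lies only in $\L_K \otimes_\Z \Q$, actually belongs to $\L_K$.

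The first step is to construct, for every virtual representation $\rho$ of $\GL_d$, a $\Q$\nobreakdash-linear functional $\chi_\rho$ on the degree\nobreakdash-$d$ homogeneous component of $\Q[\mathbf{b}]$ such that $\chi_\rho(b_K(Y)) = \chi(Y,\rho(T_Y))$ for every smooth proper irreducible $K$\nobreakdash-scheme~$Y$ of dimension $d$. Such a functional exists by Hirzebruch--Riemann--Roch: both $\ch(\rho(T_Y))$ and $\td(T_Y)$ are rational polynomials in the Chern classes of $T_Y$, and the identity $c(T_Y) = c(-T_Y)^{-1}$ allows one to rewrite $\ch(\rho(T_Y))\cdot\td(T_Y)$ as a rational polynomial in the $c_j(-T_Y)$; its degree\nobreakdash-$d$ component integrates to a rational linear combination of the coefficients of $b_K(Y)$ depending only on $\rho$.

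The second step is to invoke the following form of Hattori--Stong: a homogeneous element $P$ of degree $d$ in $\L_K \otimes_\Z \Q$ lies in $\L_K$ if and only if $\chi_\rho(P) \in \Z$ for every virtual representation $\rho$ of $\GL_d$. Applied to $P = n^{-1} b_K(X)$, the problem reduces to verifying $n \mid \chi(X,\rho(T_X))$ for all such $\rho$. Writing $\rho = \rho_1 - \rho_2$ as a formal difference of genuine representations, each $\rho_i(T_X)$ is a locally free coherent sheaf on $X$, and Theorem~\ref{thm:Divisibility} immediately yields $n \mid \chi(X,\rho_i(T_X))$ for $i=1,2$, hence $n \mid \chi(X,\rho(T_X))$.

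The main obstacle is locating and citing the correct algebraic incarnation of the Hattori--Stong theorem. The classical statement concerns the image of the Todd genus map $\pi_*(MU) \to K_*$ in topology, whereas what is needed here is an integrality criterion for the abstractly defined subring $\L_K \subset \Z[\mathbf{b}]$. This transfer is mediated by Merkurjev's identification $\L_K = \L$ (with $\L$ the topological cobordism ring viewed inside $\Z[\mathbf{b}]$), which is explicitly recalled in the introduction and reduces membership in $\L_K$ to its topological counterpart, bringing the classical Hattori--Stong statement into play.
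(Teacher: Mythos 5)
Your proposal follows essentially the same approach as the paper: use Hirzebruch--Riemann--Roch to show that the $K$\nobreakdash-theoretic characteristic numbers $\chi(Y,\rho(T_Y))$ are linear functionals on the degree\nobreakdash-$d$ component of $\Q[\mathbf b]$ (the paper's Lemma~\ref{lem:CobordSf}, phrased via $S_f$ for symmetric polynomials $f$), then invoke Hattori--Stong to conclude that these functionals cut out precisely the lattice $\L_{K,d}$, and finally apply Theorem~\ref{thm:Divisibility} to the locally free sheaves $\rho_i(T_X)$. The only difference is organizational: you appeal directly to Merkurjev's theorem $\L_K=\L$ to transport the topological Hattori--Stong statement, whereas the paper reproves Merkurjev's identity en route (Proposition~\ref{prop:CobordConclusion}) by establishing the chain $I'\subseteq I_K\subseteq I$ and combining it with the algebraic form $I'=I$ of Hattori--Stong — this makes the paper's argument self-contained but does not change its substance.
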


\begin{rem}
According to Quillen~\cite[Theorem 6.5]{Quillen}, the complex cobordism ring is canonically isomorphic to the
Lazard ring, which by definition is the coefficient ring of the universal rank one commutative formal group law.
Merkurjev~\cite[Theorem~8.2]{MerkurjevOrient} has shown that $\L_K$,
for any field~$K$, is also canonically isomorphic to the Lazard ring.
See our brief discussion of complex cobordism below for a more detailed
description of the relation between the Lazard ring, the complex cobordism
ring and the polynomial ring $\Z[\mathbf b]$.
\end{rem}

Before proving
Theorem~\ref{thm:Cobordism},
we set up some notation and state a few lemmas.
For the time being~$K$ denotes an arbitrary field.

Let $\Z[\mathbf c]=\Z[c_1,c_2,\dots]$ denote another
polynomial ring in countably many variables.
Let $I=(\alpha_j)_{j \geq 1} \in \sI$.
For $n \geq |I|$,
consider the monomial symmetric polynomial in~$n$ indeterminates
$$
\sigma_{I}=\sum \xi_1^{m_1}\cdots\xi_n^{m_n} \in \Z[\xi_1,\dots,\xi_n]\rlap{\text{,}}
$$
where the sum ranges over all $n$\nobreakdash-tuples of nonnegative integers $(m_1,\dots,m_n)$ such that for each $j \geq 1$,
exactly~$\alpha_j$ of the $m_i$'s are equal to~$j$. As~$\sigma_I$ is invariant under permutations of the $\xi_i$'s, there is a unique
$c_I \in \Z[\mathbf c]$ such that $\sigma_I=c_I(\sigma_1,\dots,\sigma_n)$, where $\sigma_i$ denotes the $i$th elementary symmetric polynomial in the $\xi_j$'s.
The polynomial $c_I \in \Z[\mathbf c]$ thus defined does not depend on the choice of $n \geq |I|$
(see \cite[Ch.~I, \textsection2]{Macdonald}).

We recall that the family $(c_I)_{I \in \sI}$ forms a basis of the $\Z$\nobreakdash-module $\Z[\mathbf c]$, and
that for $I \in \sI$, the Conner--Floyd Chern class $c_I(E)$ of a virtual vector bundle~$E$ on a smooth $K$\nobreakdash-scheme~$X$
is by definition the image of~$c_I$ by the ring homomorphism $\Z[\mathbf c] \to \CH^*(X)$, $c_i \mapsto c_i(E)$.

Let us consider $\Q[\mathbf b]$ and $\Q[\mathbf c]$ as graded algebras by letting $\deg(b_i)=i$ and $\deg(c_i)=i$.
The fundamental polynomial of an irreducible smooth proper scheme of dimension~$d$ over~$K$ is thus homogeneous of degree~$d$,
and for any $I \in \sI$, the polynomial $c_I$ is homogeneous of degree~$|I|$.
For $d \geq 0$, let $\Q[\mathbf b]_d$ and $\Q[\mathbf c]_d$ denote the homogeneous components of degree~$d$ of $\Q[\mathbf b]$ and $\Q[\mathbf c]$,
and let $\sI_d = \{I \in \sI; |I|=d\}$.
We define a perfect pairing of finite-dimensional $\Q$\nobreakdash-vector spaces
\begin{align}
\label{eq:pairing}
\langle \phantom{-}, \phantom{-}\rangle:\Q[\mathbf c]_d \times \Q[\mathbf b]_d \longrightarrow \Q
\end{align}
by declaring that the bases $(c_I)_{I \in \sI_d}$
and $(b^I)_{I \in \sI_d}$ are dual to each other.

We need to briefly recall a few facts about complex cobordism; we refer the reader to \cite[Chapter~1, \textsection\textsection1--4]{Adams}, \cite[Chapters~I--IV]{Stong}, and \cite[\textsection1]{Quillen} for details.
Let $\pi_*(MU)$ denote the complex cobordism ring, \emph{i.e.}, the generalized homology of the point with values in the Thom spectrum.
To any compact almost complex manifold~$M$ is associated an element~$[M]$ of the complex cobordism ring $\pi_*(MU)$.
These classes generate $\pi_*(MU)$ as an abelian group.
The Hurewicz map $\pi_*(MU) \to H_*(MU,\Z)$
and the Thom isomorphism
$H_*(MU,\Z)\simeq H_*(BU,\Z)$
yield a map $b:\pi_*(MU)\to H_*(BU,\Z)$, which is known to be injective.
If we endow $H_*(BU,\Z)$ with the ring structure induced by the direct sum map $\oplus:BU \times BU \to BU$, then~$b$ becomes a ring homomorphism.
Thus~$b$ identifies the complex cobordism ring $\pi_*(MU)$ with a subring of $\Z[\mathbf b]$; we shall denote this subring by $\L$.
For any compact almost complex manifold~$M$, the image in~$\Z[\mathbf b]$ of $[M] \in \pi_*(MU)$ is described by the defining formula of the fundamental polynomial~\eqref{eq:fundamentalpol}
(see \cite[(6.2), p.~49]{Quillen}).

 Let~$\Laz$ denote the Lazard ring, that is, the coefficient ring of the universal rank one commutative formal group law.
Letting $\lambda(t)\in\Z[\mathbf b][[t]]$ be the power series $t+\sum_{n\ge1}b_nt^{n+1}$ and $\lambda^{-1}(t)$ the inverse of $\lambda(t)$ with respect to composition of power series,  $\Laz$  embeds into $\Z[\mathbf b]$ via the classifying homomorphism associated to the formal group law $F(u,v)=\lambda(\lambda^{-1}(u)+\lambda^{-1}(v))$  with coefficients in $\Z[\mathbf b]$  (see for example \cite[II, \S 7]{Adams}).  According to Quillen~\cite[Theorem 6.5]{Quillen}, the image of~$\Laz$ in
$\Z[\mathbf b]$ is exactly the subring $\L$.   Finally, Merkurjev's result~\cite[Theorem~8.2]{MerkurjevOrient} states that $\L_K=\L$ for all fields $K$,
a fact that we shall reprove below using the Hattori--Stong theorem (Proposition~\ref{prop:CobordConclusion}).

According to Milnor (see, \emph{e.g.}, \cite[p.~86, Corollary 10.8]{Adams}),
the ring~$\L$ is generated by the classes of projective spaces $\P^n_\C$ for $n \geq 1$
and of the so-called \emph{Milnor hypersurfaces} $H_{m,n}$
with $2 \leq m \leq n$, where $H_{m,n}$
is the smooth hypersurface of bidegree $(1,1)$ in $\P^m_\C \times \P^n_\C$
defined by the equation $\sum_{i=0}^m x_i y_i=0$.

\begin{lem}
\label{lem:Cobord1}
We have the inclusion $\L \subseteq \L_K$ of subrings of $\Z[\mathbf b]$.
\end{lem}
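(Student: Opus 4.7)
The plan is to reduce the inclusion $\L\subseteq\L_K$ to a check on a small family of generators. The Milnor generation statement recalled just before the lemma tells us that, as a subring of $\Z[\mathbf b]$, the ring $\L$ is generated by the classes of $\P^n_\C$ for $n\ge 1$ and of the Milnor hypersurfaces $H_{m,n}\subset\P^m_\C\times\P^n_\C$ for $2\le m\le n$. Since $\L_K$ is itself a subring of $\Z[\mathbf b]$, it will be enough to exhibit each of the generators $b_\C(\P^n_\C)$ and $b_\C(H_{m,n})$ as $b_K(Y)$ for some smooth proper irreducible $K$\nobreakdash-scheme~$Y$.

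The natural choice is $Y=\P^n_K$ or the $K$\nobreakdash-form of $H_{m,n}$ cut out in $\P^m_K\times\P^n_K$ by the equation $\sum_{i=0}^m x_iy_i=0$; both are smooth proper irreducible $K$\nobreakdash-schemes under the condition $2\le m\le n$. So the substantive step is to verify the two equalities
\begin{align*}
b_K(\P^n_K)=b_\C(\P^n_\C)\qquad\text{and}\qquad b_K(H_{m,n,K})=b_\C(H_{m,n})
\end{align*}
in $\Z[\mathbf b]$. I would argue this directly from the defining formula \eqref{eq:fundamentalpol}: each coefficient is a $\Z$\nobreakdash-linear combination of certain intersection numbers, and for these particular varieties all such numbers can be read off from the Chow ring $\Z[H]/(H^{n+1})$ of~$\P^n$ via the Euler sequence $0\to\sO\to\sO(1)^{n+1}\to T_{\P^n}\to 0$ together with $\deg(H^n)=1$, and, for $H_{m,n}$, from the Chow ring of $\P^m\times\P^n$, the two pulled-back Euler sequences, and the normal-bundle sequence $0\to T_{H_{m,n}}\to T_{\P^m\times\P^n}|_{H_{m,n}}\to\sO(1,1)|_{H_{m,n}}\to 0$. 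Everything in sight is built from integer polynomial manipulations of hyperplane classes, and is therefore the same over $K$ as over $\C$.

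I do not anticipate any serious obstacle; the only real content is the observation that Chern-number computations on $\P^n$ and on Milnor hypersurfaces are ``universal'' over $\Spec\Z$, so the value of $b_K$ on their $K$\nobreakdash-forms agrees with the value of $b_\C$ on their complex counterparts. Once the two equalities above are in place, both generators of~$\L$ lie in~$\L_K$ by the very definition of~$\L_K$, and the Milnor generation theorem yields $\L\subseteq\L_K$.
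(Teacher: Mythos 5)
Your argument is correct and matches the paper's proof in essence: both reduce to Milnor's generators $\P^n$ and $H_{m,n}$, and both rest on the observation that these varieties admit smooth proper models over $\Spec(\Z)$, so that their fundamental polynomials are independent of the base field. The paper states this universality in one line, while you unpack it via the Euler and normal-bundle sequences; the extra detail is harmless but not a different route.
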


\begin{proof}
If~$\sX$ is a smooth proper scheme over~$\Z$, the polynomial $b_K(\sX \otimes_\Z K)$ does not depend on the field~$K$.
As complex projective spaces and Milnor hypersurfaces possess smooth proper models over $\Spec(\Z)$,
the lemma follows.
\end{proof}

For any $d\geq 0$, let $\L_d = \L \cap \Q[\mathbf b]_d$ and $\L_{K,d}=\L_K \cap \Q[\mathbf b]_d$.
We recall that $\L_d$ is a lattice in $\Q[\mathbf b]_d$ (\emph{i.e.}, a finitely generated subgroup containing a basis),
see \cite[p.~117, Theorem]{Stong}.
As $\L \subseteq \L_K \subseteq \Z[\mathbf b]$,
the group $\L_{K,d}$ is also a lattice in $\Q[\mathbf b]_d$.
Let
$I_{K,d}=\big\{c \in \Q[\mathbf c]_d \; ; \; \langle c, \L_{K,d}\rangle \subseteq \Z\big\}$
denote the  lattice dual to $\L_{K,d}$
with respect to~\eqref{eq:pairing}.
Similarly, let $I_d \subset \Q[\mathbf c]_d$ denote the lattice dual to $\L_d$.

We need to introduce one more subgroup of $\Q[\mathbf c]_d$.
For any polynomial $f \in \Z[t_1,\dots,t_d]$ invariant under permutations of the $t_i$'s,
the homogeneous part of degree~$d$ of the power series
\begin{align}
f\big(e^{\xi_1},\dots,e^{\xi_d}\big)\prod_{j=1}^d\frac{\xi_j}{1-e^{-\xi_j}} \in \Q[[\xi_1,\dots,\xi_d]]
\end{align}
is symmetric in the $\xi_i$'s. Thus it can be written $R_f(\sigma_1,\dots,\sigma_d)$
for a unique
$R_f \in \Q[\mathbf c]_d$,
where~$\sigma_i$ denotes the $i$th elementary symmetric polynomial in the~$\xi_j$'s.
Let $s_1,s_2,\ldots \in \Z[\mathbf c]$ be the Segre polynomials, defined by the formula
\begin{align}
\label{eq:Segre}
1 + \sum_{i\geq 1} s_i t^i = \Big(1 + \sum_{i\geq 1} c_i t^i\Big)^{-1}  \in \Z[\mathbf c][[t]] \rlap{\text{.}}
\end{align}
Let $S_f = R_f(s_1,\dots,s_d) \in \Q[\mathbf c]_d$.
Let $I'_d \subset \Q[\mathbf c]_d$ be the subgroup consisting of the polynomials~$S_f$
when~$f$ ranges over all symmetric polynomials in $\Z[t_1,\dots,t_d]$.

Finally,
let $I_K = \bigoplus_{d \geq 0} I_{K,d}$, $I= \bigoplus_{d\geq 0}I_d$, and $I'=\bigoplus_{d \geq 0} I'_d$.
The definition of~$I'$ is motivated by the following lemma.

\begin{lem}
\label{lem:CobordSf}
Let~$X$ be a smooth proper irreducible variety over a field~$K$,
of dimension~$d$.
Let $m_1,\dots,m_d$ be nonnegative integers, and let
$f=\prod \tau_i^{m_i}$, where~$\tau_i$ denotes the $i$th elementary symmetric polynomial in $t_1,\dots,t_d$.
Then
\begin{align*}
\big\langle S_f, b_K(X)\big\rangle = \chi\Big(X,\;\bigotimes_{i=1}^d \Big(\bigwedge^i T_X\Big)^{\otimes m_i}\Big)\rlap{\text{.}}
\end{align*}
\end{lem}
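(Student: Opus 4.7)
The plan is to compute both sides as the degree of the same polynomial expression in the Chern classes of~$T_X$: the right-hand side via Hirzebruch--Riemann--Roch, and the left-hand side by unwinding the definition of the pairing~\eqref{eq:pairing}.

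For the right-hand side, I would invoke the splitting principle and work with formal Chern roots $\xi_1,\dots,\xi_d$ of~$T_X$. The Chern character of an exterior power satisfies $\ch(\wedge^i T_X) = \tau_i(e^{\xi_1},\dots,e^{\xi_d})$, and multiplicativity of~$\ch$ under tensor product gives
\begin{align*}
\ch\Big(\bigotimes_{i=1}^d (\wedge^i T_X)^{\otimes m_i}\Big) = \prod_{i=1}^d \tau_i(e^{\xi_1},\dots,e^{\xi_d})^{m_i} = f(e^{\xi_1},\dots,e^{\xi_d})\rlap{\text{.}}
\end{align*}
Since $\td(T_X)=\prod_j \xi_j/(1-e^{-\xi_j})$, the Hirzebruch--Riemann--Roch theorem identifies $\chi(X,\bigotimes_i (\wedge^i T_X)^{\otimes m_i})$ with the degree of the degree\nobreakdash-$d$ component of $f(e^{\xi_1},\dots,e^{\xi_d})\prod_j \xi_j/(1-e^{-\xi_j})$, which by the very definition of~$R_f$ equals $\deg R_f(c_1(T_X),\dots,c_d(T_X))$.

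For the left-hand side, the key observation is that for any homogeneous $P \in \Q[\mathbf c]_d$, expanding $P=\sum_{|I|=d} a_I c_I$ in the basis dual to $(b^I)_{I\in\sI_d}$ and pairing with $b_K(X)=\sum_{|I|=d}\deg(c_I(-T_X))b^I$ yields
\begin{align*}
\langle P, b_K(X)\rangle = \sum_{|I|=d} a_I \deg\bigl(c_I(-T_X)\bigr) = \deg\bigl(P(c_1(-T_X),\dots,c_d(-T_X))\bigr)\rlap{\text{,}}
\end{align*}
where the second equality uses the defining relation $\sigma_I=c_I(\sigma_1,\dots,\sigma_n)$ between the monomial and elementary symmetric polynomials. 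Specializing to $P=S_f=R_f(s_1,\dots,s_d)$, the substitution $c_i \mapsto c_i(-T_X)$ turns each formal Segre polynomial~$s_i$ into the Segre class $s_i(-T_X)$ of the virtual bundle $-T_X$; but in $K$-theory $c(-T_X)=c(T_X)^{-1}$, so $s(-T_X)=c(-T_X)^{-1}=c(T_X)$, i.e., $s_i(-T_X)=c_i(T_X)$ for every~$i$. Hence $\langle S_f, b_K(X)\rangle = \deg R_f(c_1(T_X),\dots,c_d(T_X))$, matching the Hirzebruch--Riemann--Roch computation.

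There is no real obstacle here: the argument is a direct calculation once one has the right formalism. The only subtle point to track carefully is the duality between Chern and Segre classes under the involution $E \mapsto -E$ in $K_0$, which is what lets the Segre polynomials appearing in the definition of~$S_f$ convert into honest Chern classes of~$T_X$ at the end.
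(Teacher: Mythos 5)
Your proposal is correct and follows the same strategy as the paper's proof: identify both sides with the degree of $R_f(c_1(T_X),\dots,c_d(T_X))$, using Hirzebruch--Riemann--Roch for the Euler characteristic. The only difference is one of exposition: the paper simply asserts in its opening sentence that $\langle S_f, b_K(X)\rangle$ equals this degree, whereas you supply the justification by unwinding the pairing~\eqref{eq:pairing} against $b_K(X)$ and invoking $s_i(-T_X)=c_i(T_X)$; this is exactly the point the paper leaves implicit, and your account of it is right.
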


\begin{proof}
The number
$\langle S_f, b_K(X)\rangle$ is the degree of the element of $\CH_0(X) \otimes_\Z \Q$
obtained by evaluating the polynomial $R_f \in \Q[\mathbf c]$
on the Chern classes of~$T_X$.
In addition,
if $\xi_1,\dots,\xi_d$ denote the Chern roots of $T_X$,
we have
\begin{align*}
\ch\Big(\bigotimes_{i=1}^d \big(\bigwedge^i T_X\big)^{\otimes m_i}\Big)=
\prod \ch\big(\bigwedge^i T_X\big)^{m_i} = f(e^{\xi_1},\dots,e^{\xi_d})\rlap{\text{.}}
\end{align*}
Hence the lemma results from
the Hirzebruch--Riemann--Roch theorem.
\end{proof}

As~$S_f$ depends linearly on~$f$ and as any symmetric polynomial $f\in\Z[t_1,\dots,t_d]$ is a $\Z$\nobreakdash-linear combination of monomials $\prod \tau_i^{m_i}$,
Lemma~\ref{lem:CobordSf}
gives the value of $\langle S_f, b_K(X)\rangle$ for any $S_f \in I'_d$.
In particular, it implies that $I' \subseteq I_K$;
thus, according to Lemma~\ref{lem:Cobord1}, we have $I' \subseteq I_K \subseteq I$.
On the other hand, a theorem due to Hattori and Stong asserts that $I'=I$ (see \cite[Theorem 1]{StongRR}, \cite[Theorem I]{Hattori}).
As a result,
letting $\L'_d =\big\{b \in \Q[\mathbf b]_d \; ; \; \langle I'_d\mkern1mu, b\rangle \subseteq \Z\big\}$ and $\L'=\bigoplus_{d\geq 0} \L'_d$,
we have established the following proposition:

\begin{prop}
\label{prop:CobordConclusion}
For any field~$K$, we have $I'=I_K=I$ and $\L=\L_K=\L'$.
\end{prop}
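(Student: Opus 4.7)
The proposition is essentially a formal consequence of what has already been set up just before its statement, together with one external input.

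The plan is as follows. First, I would recall the containments already in hand: from Lemma~\ref{lem:CobordSf} and the $\Z$-linearity of $f \mapsto S_f$, we have $I' \subseteq I_K$; and from Lemma~\ref{lem:Cobord1} (which gives $\L \subseteq \L_K$) and the contravariance of duality with respect to the perfect pairing~\eqref{eq:pairing}, we have $I_K \subseteq I$. Thus
\begin{align*}
I' \subseteq I_K \subseteq I\rlap{\text{.}}
\end{align*}

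Next, I would invoke the Hattori--Stong theorem as cited (\cite[Theorem~1]{StongRR}, \cite[Theorem~I]{Hattori}), which asserts $I' = I$. Combined with the sandwich above, this immediately forces $I' = I_K = I$, proving the first half of the proposition.

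For the second half, I would use that $\L_d \subseteq \Q[\mathbf b]_d$ is a full lattice (Stong's result, as cited before the proposition), and that the intermediate subgroup $\L_{K,d}$ is sandwiched between $\L_d$ and $\Z[\mathbf b] \cap \Q[\mathbf b]_d$, hence is also a full lattice. Since~\eqref{eq:pairing} is a perfect pairing of finite-dimensional $\Q$-vector spaces, standard lattice duality gives that the double dual of a full lattice is itself: explicitly, $\L_d$ is the dual of $I_d$, the group $\L_{K,d}$ is the dual of $I_{K,d}$, and $\L'_d$ is by definition the dual of $I'_d$. From the equality $I_d = I_{K,d} = I'_d$ already proved, we conclude $\L_d = \L_{K,d} = \L'_d$ in each degree~$d$, and summing over~$d$ yields $\L = \L_K = \L'$.

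The only substantive input is the Hattori--Stong theorem, which we use as a black box; everything else is elementary bookkeeping with the perfect pairing~\eqref{eq:pairing} and the lattice property of $\L_d$. In particular, no main obstacle remains once the three lattices $I'$, $I_K$, $I$ have been related (via Lemmas~\ref{lem:Cobord1} and~\ref{lem:CobordSf}) and once Hattori--Stong is invoked; the passage from the $I$-equalities to the $\L$-equalities is forced by duality.
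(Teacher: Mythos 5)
Your proposal is correct and follows exactly the paper's own line of reasoning: the sandwich $I' \subseteq I_K \subseteq I$ from Lemmas~\ref{lem:CobordSf} and~\ref{lem:Cobord1}, collapse via Hattori--Stong, then dualize. The only difference is that you spell out the lattice double-duality step (full lattice because $\L_d \subseteq \L_{K,d} \subseteq \Z[\mathbf b]_d$) which the paper leaves implicit, but that is the intended argument.
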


We are now in a position to complete the proof of Theorem~\ref{thm:Cobordism}.

\begin{proof}[Proof of Theorem~\ref{thm:Cobordism}]
Let~$X$ and~$K$ be as in the statement of the theorem.
Let $n=\ind(X)$ if $p=0$ or $p>\dim(X)+1$, otherwise let~$n$ denote the prime-to\nobreakdash-$p$ part of $\ind(X)$.
According to Theorem~\ref{thm:Divisibility} and Lemma~\ref{lem:CobordSf}, we have $\langle I', b_K(X) \rangle \subseteq n\Z$.
In other words, the class $b_K(X)$ is divisible by~$n$ in $\L'$.  Thanks to Proposition~\ref{prop:CobordConclusion}, we deduce
that it is divisible by~$n$ in $\L_K$.
\end{proof}

Corollary~\ref{cor:Cobordismcn} is an
essentially equivalent reformulation 
of Theorem~\ref{thm:Cobordism}
in terms of characteristic numbers.
To ease notation we write $P(T_X)$ for $P(c_1(T_X),\dots,c_d(T_X))$.

\begin{cor}
\label{cor:Cobordismcn}
Let~$R$ be a Henselian discrete valuation ring with algebraically closed residue field~$k$ and quotient field~$K$.
Let~$p$ denote the characteristic of~$k$.
Let $d \geq 1$ and let $P \in \Q[c_1,\dots,c_d]$ be homogeneous of degree~$d$ with respect to the grading $\deg(c_i)=i$.
Assume that
 $\deg(P(T_X))\in \Z$ for any $d$\nobreakdash-dimensional product~$X$ of complex projective spaces and Milnor hypersurfaces.
Then, for any smooth proper irreducible $K$\nobreakdash-scheme $X$ of dimension~$d$, the rational number $\deg(P(T_X))$ is an integer, and we have:
\begin{enumerate}
\item[(i)] If $p=0$, then $\ind(X)$ divides $\deg(P(T_X))$.
\item[(ii)] If $p>0$, then the prime-to\nobreakdash-$p$ part of $\ind(X)$ divides $\deg(P(T_X))$.
\item[(iii)] If $p>\dim(X)+1$ or if the denominators of the coefficients of~$P$ are prime to~$p$, then $\ind(X)$ divides $\deg(P(T_X))$.
\end{enumerate}
\end{cor}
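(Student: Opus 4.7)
The plan is to derive Corollary~\ref{cor:Cobordismcn} from Theorem~\ref{thm:Cobordism} by interpreting the characteristic number $\deg(P(T_X))$ as the value of the pairing~\eqref{eq:pairing} on a suitable pair of elements in $I_K$ and $\L_K$.

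First I would introduce the Segre involution $\tau:\Q[\mathbf{c}] \to \Q[\mathbf{c}]$, the $\Q$\nobreakdash-algebra automorphism sending $c_i$ to the $i$th Segre polynomial~$s_i$ from~\eqref{eq:Segre}. The defining identity of the $s_i$'s shows that $\tau^2 = \mathrm{id}$, and the equality $c_i(-E) = s_i(E)$ for any virtual vector bundle~$E$ on a smooth $K$\nobreakdash-scheme yields the identity $(\tau Q)(T_X) = Q(-T_X)$ in $\CH^*(X) \otimes_\Z \Q$ for every $Q \in \Q[\mathbf{c}]$. Unwinding~\eqref{eq:fundamentalpol} and~\eqref{eq:pairing} then gives $\langle Q, b_K(X) \rangle = \deg(Q(-T_X))$ for any $Q \in \Q[\mathbf{c}]_d$, and substituting $Q = \tau P$ produces the identity
\begin{align*}
\deg(P(T_X)) = \langle \tau P, b_K(X) \rangle\rlap{\text{.}}
\end{align*}

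Next I would show $\tau P \in I_K$. The hypothesis on~$P$ says precisely that $\langle \tau P, b_\C(Y) \rangle = \deg(P(T_Y)) \in \Z$ for every $d$\nobreakdash-dimensional product~$Y$ of complex projective spaces and Milnor hypersurfaces. By Milnor's theorem (recalled before Lemma~\ref{lem:Cobord1}) combined with the multiplicativity of~$b_K$, the elements $b_\C(Y)$ arising in this way $\Z$\nobreakdash-span the homogeneous component $\L_d = \L_{\C,d}$, so $\tau P \in I_d$, which coincides with $I_{K,d}$ by Proposition~\ref{prop:CobordConclusion}. Pairing with $b_K(X) \in \L_K$ shows that $\deg(P(T_X))$ is an integer. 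Applying Theorem~\ref{thm:Cobordism} to write $b_K(X) = n \cdot b'$ in $\L_K$, where~$n$ is the appropriate divisibility factor, one then gets $\deg(P(T_X)) = n \cdot \langle \tau P, b' \rangle$; this settles parts~(i), (ii), and the $p > \dim(X)+1$ case of~(iii) at once.

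The main obstacle is the remaining subcase of~(iii), in which $P$ has denominators prime to~$p$ but $p \leq \dim(X)+1$: Theorem~\ref{thm:Cobordism} only provides divisibility of $b_K(X)$ by the prime-to-$p$ part of $\ind(X)$ in this range, so the pairing argument falls short of the full $\ind(X)$\nobreakdash-divisibility. To recover the missing $p$\nobreakdash-adic divisibility I would clear denominators, writing $P = P'/N$ with $P' \in \Z[\mathbf{c}]_d$ and $\gcd(N,p) = 1$, and combine the integrality of $\deg(P(T_X)) \in \Z$ already established with Proposition~\ref{prop:RR}, exploiting that~$N$ being prime to~$p$ ensures that the $(d+1)!$\nobreakdash-denominators appearing in the Hirzebruch--Riemann--Roch formula contribute no loss of information at the prime~$p$.
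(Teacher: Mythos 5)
Your argument for parts (i), (ii), and the $p>\dim(X)+1$ case of (iii) is essentially the paper's proof: set $Q=P(s_1,\dots,s_d)$ (your $\tau P$), show $\langle Q,b_K(X)\rangle=\deg(P(T_X))$, observe that integrality of $P$ on products of projective spaces and Milnor hypersurfaces means $Q\in I_d=I_{K,d}$ by Milnor's generation theorem and Proposition~\ref{prop:CobordConclusion}, then apply Theorem~\ref{thm:Cobordism}. That part is correct and matches.

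The finish for the remaining subcase of (iii) is where your proposal goes astray. You propose to use Proposition~\ref{prop:RR} together with the coprimality of $N$ and $p$, invoking the $(d+1)!$-denominators in Hirzebruch--Riemann--Roch. But when $p\le\dim(X)+1$ we have $p\mid(d+1)!$, so Proposition~\ref{prop:RR} only controls the prime-to-$(d+1)!$ part of $\ind(X)$ and gives you nothing new at the prime~$p$ — it cannot supply the missing $p$-adic divisibility, no matter what $N$ is. The actual needed step is much more elementary and has nothing to do with Riemann--Roch: with $P'=NP\in\Z[\mathbf c]_d$ and $\gcd(N,p)=1$, the class $P'(T_X)=NP(T_X)$ lies in $\CH_0(X)$ (integral coefficients), so its degree $N\deg(P(T_X))$ is divisible by $\ind(X)$ because the image of the degree map $\CH_0(X)\to\Z$ is exactly $\ind(X)\Z$. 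Since $N$ is prime to $p$, this pins down the $p$-part of $\deg(P(T_X))$, and combining with part~(ii) for the prime-to-$p$ part finishes the proof. You had the right idea in clearing denominators, but the closing appeal to Proposition~\ref{prop:RR} is a dead end.
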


\begin{proof}
Let $Q=P(s_1,\dots,s_d) \in \Q[\mathbf c]_d$, where $s_1,\dots,s_d$ are the Segre polynomials (see~\eqref{eq:Segre}).
The linear form $\Q[\mathbf b]_d \to \Q$, $b \mapsto \langle Q, b\rangle$ maps $b_K(X)$ to $\deg(P(T_X))$, and is integral-valued on~$\L_d$
since~$\L_d$ is spanned by the fundamental polynomials of $d$\nobreakdash-dimensional products of complex projective spaces and Milnor hypersurfaces.
Thus,
by Proposition~\ref{prop:CobordConclusion}, it restricts to a homomorphism $\L_{K,d} \to \Z$.
Applying Theorem~\ref{thm:Cobordism} now yields the desired result, noting, for the third assertion, that if~$n$ is the lowest common multiple
of the denominators of the coefficients of~$P$,
then the index of~$X$ divides $n\deg(P(T_X))$ since $nP(T_X) \in \CH_0(X)$.
\end{proof}

\begin{rem}
\label{rem:virtualrep}
If $P \in \Q[\mathbf c]$ satisfies the hypothesis of Corollary~\ref{cor:Cobordismcn},
Proposition~\ref{prop:CobordConclusion} implies that for any field~$K$ and any smooth proper irreducible $K$\nobreakdash-scheme~$X$
of dimension~$d$, the rational number $\deg(P(T_X))$ may be written, in a way which does not depend on~$X$,
as a $\Z$\nobreakdash-linear combination of Euler--Poincar\'e characteristics of tensor products of exterior powers of~$T_X$.
In other words, there exists a virtual representation~$\rho$ of $\GL_{d,K}$ such that $\deg(P(T_X))=\chi(X,\rho(T_X))$.

More precisely, by a theorem of Serre~\cite[\textsection3.6, Th\'eor\`eme 4]{SerreGpRep}, the Grothendieck group of the category of representations of $\GL_{d,K}$
over~$K$ does not depend on the field~$K$: sending a representation~$\rho$ to its character
(viewed as a symmetric function in the characters $t_1,\dots,t_d$ of a maximal torus $\G_\mathrm{m}^d \subset \GL_d$) induces an isomorphism
\begin{align}
K_0(\Rep_K(\GL_{d,K}))\isoto\Z[t_1,\dots,t_d]^{\mathfrak S_d}\Big[\frac{1}{t_1\cdots t_d}\Big]\rlap{\text{.}}
\end{align}
Thus, writing~$P$ as $R_f$ for an $f \in \Z[t_1,\dots,t_d]^{\mathfrak S_d}$
and letting~$\rho$ be the
 unique virtual representation of $\GL_{d,K}$ with character~$f$,
we have
$\deg(P(T_X))=\chi(X,\rho(T_X))$
for any field~$K$ and any smooth proper irreducible $K$\nobreakdash-scheme~$X$ of
dimension~$d$.
\end{rem}

\medskip
The remainder of this section is devoted to examples.  From now on, the letter~$K$ will always denote the quotient
field of a Henselian discrete valuation ring with algebraically closed residue field of characteristic $p \geq 0$.

\begin{ex}
\label{ex:cobordismfirstexample}
The polynomial $P=\frac{1}{2}{c_d}$ satisfies the hypothesis of Corollary~\ref{cor:Cobordismcn} if~$d$ is odd.
Indeed, by Poincar\'e duality, the topological Euler--Poincar\'e characteristic of any odd-dimensional compact complex manifold is even.

As a consequence, if $p\neq 2$, the index of
any odd-dimensional smooth proper irreducible $K$\nobreakdash-scheme~$X$
divides $\frac{1}{2}e(X)$.
Here $e(X)$ denotes the $\ell$\nobreakdash-adic Euler--Poincar\'e characteristic of $X\otimes_K \bar K$
for any $\ell$ invertible in~$K$, where $\bar K$ is a separable closure of~$K$ (see \cite[Corollaire~4.9]{JouanolouSGA5}).

In this example, it is easy to exhibit the virtual representation~$\rho$ of $\GL_d$ whose existence is predicted by
Remark~\ref{rem:virtualrep}.  Namely, if~$V$ denotes the standard representation of $\GL_d$
and~$V^*$ is its dual,
the virtual representation
\begin{align}
\rho(V) = \sum_{i=0}^{\frac{d-1}{2}} (-1)^i \bigwedge^i V^*
\end{align}
satisfies $\frac{1}{2}e(X) = \chi(X,\rho(T_X))$.  Indeed, we have
$e(X)=\sum_{i=0}^d (-1)^i \chi(X,\Omega^i_{X/K})$
according to \cite[Proposition~4.11]{JouanolouSGA5},
and
$(-1)^i\chi(X,\Omega^i_{X/K})=(-1)^{d-i}\chi(X,\Omega^{d-i}_{X/K})$
for all~$i$ by Serre duality.
\end{ex}

\begin{ex}
The polynomial $P=\frac{1}{2}{c_1^d}$ satisfies the hypothesis of Corollary~\ref{cor:Cobordismcn} if~$d$ is odd.
To see this, first note that if $X=A \times B$ with $\dim(A)=a$, $\dim(B)=b$, then
$$
\deg\big(c_1^{a+b}(T_X)\big) = \binom{a+b}{a} \deg\big(c_1^a(T_A)\big) \deg\big(c_1^b(T_B)\big)\rlap{\text{;}}
$$
thus it suffices to check that~$P$ is integral-valued on
odd-dimensional projective spaces and Milnor hypersurfaces.
We have $\frac{1}{2}\deg\big(c_1^d(T_{\P^d})\big)=\frac{1}{2}(-1)^d(d+1)^d$, which is an
integer when~$d$ is odd, and
if  $d=m+n-1$ with $2 \leq m \leq n$,
the adjunction formula shows that
\begin{align*}
\frac{1}{2}\deg\big(c_1^d(H_{m,n})\big)=(-1)^d  \binom{d-1}{m-1} m^{m-1} n^{n-1} d\rlap{\text{,}}
\end{align*}
which is an integer as well.

Hence, if $p\neq 2$, the index of any smooth proper irreducible $K$\nobreakdash-scheme~$X$ of odd dimension~$d$ divides $\frac{1}{2}(K_X)^d$.
In contrast with the previous example,
we were unable in this case to find a closed-form formula (valid for all odd~$d$) for a virtual representation~$\rho_d$ of $\GL_d$ such that
$\frac{1}{2}(K_X)^d = \chi(X,\rho_d(T_X))$ for all $d$\nobreakdash-dimensional~$X$.
Such virtual representations do exist as a consequence of the Hattori--Stong theorem (see
Remark~\ref{rem:virtualrep}).
\end{ex}

\begin{ex}
Let~$d$ be an even integer.
For any compact complex manifold~$X$ of dimension~$d$,
the symmetric bilinear form $H^d(X,\R) \times H^d(X,\R) \to \R$ given by cup-product is symmetric, and hence has a well-defined signature $\sigma(X)$.
Hirzebruch's signature formula \cite[Theorem~8.2.2]{Hirzebruch}
furnishes a homogeneous degree~$d$ polynomial $P_d \in \Q[\mathbf c]$
such that $\sigma(X)=\deg(P_d(T_X))$ for all such~$X$.
Specifically, to obtain~$P_d$, evaluate the polynomial
denoted $L_{d/2}$ in \cite[\textsection1.5]{Hirzebruch}
at $p_i=\sum_{j=0}^{2i} (-1)^{i+j}c_j c_{2i-j}$, with the convention that $c_0=1$.
The first values of~$P_d$ are $P_2=\frac{1}{3}c_1^2 - \frac{2}{3}c_2$,
$P_4=\frac{1}{45}\big(14c_4-14c_1c_3+3c_2^2+4c_2c_1^2-c_1^4)$, etc.

For any field~$k$ and any smooth proper irreducible $k$\nobreakdash-scheme~$X$ of even dimension, we may define the ``signature'' of~$X$
as $\sig(X)=\deg(P_{\dim(X)}(T_X))$.
In case~$k$ admits an embedding $\tau:k\hookrightarrow \C$, we have $\sig(X)=\sigma(X(\C))$; in particular, $\sig(X)$ is an integer and $\sigma(X(\C))$ is independent of the choice of~$\tau$.
Thanks to Proposition~\ref{prop:CobordConclusion}, we conclude that
$\sig(X)$ is always an integer, even when~$k$ has positive characteristic
and~$\sig(X)$ has no interpretation as the signature of a real quadratic form.

By Corollary~\ref{cor:Cobordismcn}, if~$X$ is a
smooth proper irreducible $K$\nobreakdash-scheme of even dimension, the index of~$X$ divides $\sig(X)$ if $p=0$ or $p>\dim(X)+1$,
and the prime-to\nobreakdash-$p$ part of $\ind(X)$ divides $\sig(X)$ otherwise.

A representation~$\rho$ of $\GL_d$ such that $\sig(X) = \chi(X,\rho(T_X))$ for any field~$k$ and any
smooth proper irreducible $k$\nobreakdash-scheme~$X$ of even dimension~$d$
is given by
\begin{align}
\rho(V) = \bigoplus_{i=0}^{d} \bigwedge^i V^*\rlap{\text{,}}
\end{align}
where~$V$ denotes the standard representation.   Indeed, when $k=\C$, Hodge has proved that $\sigma(X(\C))=\sum_{i=0}^d \chi(X,\Omega^i_{X/\C})$
(see~\cite{Hodge} and~\cite[Introduction~0.6]{Hirzebruch}),
from which it follows
that $\sig(X)=\sum_{i=0}^d \chi(X,\Omega^i_{X/k})$ for any field~$k$.
\end{ex}

\begin{ex}
\label{ex:steenrodexample}
Let~$q$ be a prime number,
let $I=(\alpha_j)_{j \geq 1} \in \sI$, and let $d=|I|$.
Assume that $\alpha_j=0$ for every~$j$ which is not of the form $q^n-1$ for some $n \geq 1$,
and denote by $s_1,\dots,s_d \in \Z[\mathbf c]$ the Segre polynomials (see~\eqref{eq:Segre}).
Then
\begin{align*}
P=\frac{1}{q}c_I(s_1,\dots,s_d) \in \Q[\mathbf c]_d
\end{align*}
satisfies the hypothesis of Corollary~\ref{cor:Cobordismcn}.
Indeed, for any irreducible smooth projective scheme~$X$ of dimension~$d$ over a field of characteristic $\neq q$, the integer $\deg(c_I(-T_X))$ is divisible by~$q$, as follows from
\cite[Proposition 5.3]{MerkurjevSteenrodOps} applied to the structure morphism of~$X$.

Thanks to Corollary~\ref{cor:Cobordismcn}, we conclude that if $p \neq q$,
the index of any irreducible smooth proper $K$\nobreakdash-scheme of
dimension~$d$ divides $\frac{1}{q}\deg(c_I(-T_X))$.
\end{ex}

\begin{ex}
\label{ex:halfsegre}
Taking $q=2$ and $I=(d,0,\dots)$ in Example~\ref{ex:steenrodexample}, we see that
for any integer~$d$,
the polynomial $P=\frac{1}{2}s_d$
satisfies the hypothesis of Corollary~\ref{cor:Cobordismcn}.
\end{ex}

\begin{ex}
\label{ex:cobordismlastexample}
Let $I=(\alpha_j)_{j \geq 1}$ with $\alpha_d=1$ and $\alpha_j=0$ for $j \neq d$.
In this case, the polynomial~$c_I$ is the $d$th Newton polynomial $Q_d$
(see \cite[Part~II, \textsection12]{Adams}).
According to Example~\ref{ex:steenrodexample}, for any irreducible smooth proper $K$\nobreakdash-scheme~$X$ whose dimension is of the form $d=q^n-1$ for some prime $q \neq p$ and
some $n\geq 1$, the index of~$X$ divides $\frac{1}{q}\deg(Q_d(-T_X))$.

This example and Example~\ref{ex:halfsegre} may be combined as follows:
fix integers $n,m \geq 0$ and a prime $q\neq p$, and let $d=m(q^n-1)$.
Take $I=(\alpha_j)_{j \geq 1}$ with $\alpha_{q^n-1}=m$ and $\alpha_j=0$ for $j\neq q^n-1$. Then~$c_I(E)$,
for any vector bundle~$E$,
is the $m$th elementary symmetric function in the $(q^n-1)$st powers of the Chern roots of~$E$. For any irreducible smooth
proper $K$\nobreakdash-scheme~$X$ of dimension~$d$, the index of~$X$
divides $\frac{1}{q}\deg(c_I(-T_X))$.
\end{ex}

\bibliographystyle{amsplain}
\bibliography{indchi}

\end{document}